\newenvironment{customthm}[1]
  {\innercustomthm}
  {\endinnercustomthm}
\newtheorem{theorem}{Theorem}[section]
\newtheorem{lemma}[theorem]{Lemma}
\newtheorem{proposition}[theorem]{Proposition}
\theoremstyle{definition}
\newtheorem{definition}[theorem]{Definition}
\newtheorem{example}[theorem]{Example}
\newtheorem{remark}[theorem]{Remark}
\newtheorem*{claim}{Claim}
\newcommand{\Z}{\mathbb{Z}}
\newcommand{\R}{\mathbb{R}}
\newcommand{\id}{\operatorname{id}}
\newcommand{\im}{\operatorname{im}}
\newcommand{\Grcc}{\overline{\mathcal{B}}_{(c,c)}}
\newcommand{\Grccp}{\overline{\mathcal{B}}_{(c,c')}}
\newcommand{\Grcpcp}{\overline{\mathcal{B}}_{(c',c')}}
\begin{document}

\title{Conway's potential function via the Gassner representation}
\author{Anthony Conway}
\address{Universit\'e de Gen\`eve, Section de math\'ematiques, 2-4 rue du Li\`evre, 1211 Gen\`eve 4, Switzerland}
\email{anthony.conway@unige.ch}
\author{Solenn Estier}
\address{Universit\'e de Gen\`eve, Section de math\'ematiques, 2-4 rue du Li\`evre, 1211 Gen\`eve 4, Switzerland}
\email{solenn.estier@etu.unige.ch}

\begin{abstract}
We show how Conway's multivariable potential function can be constructed using braids and the reduced Gassner representation. The resulting formula is a multivariable generalization of a construction, due to Kassel-Turaev, of the Alexander-Conway polynomial in terms of the Burau representation. Apart from providing an efficient method of computing the potential function, our result also removes the sign ambiguity in the current formulas which relate the multivariable Alexander polynomial to the reduced Gassner representation. We also relate the distinct definitions of this representation which have appeared in the literature.
\end{abstract}
\maketitle

\section{Introduction}

The one variable Alexander polynomial of an oriented link $L$ is a Laurent polynomial $\Delta_L(t) \in \Z[t^{\pm 1}]$ which is defined up to multiplication by $\pm t^{k}$ with $k \in \Z$. Despite this indeterminacy, $\Delta_L(t)$ has proved invaluable in low dimensional topology and can be understood in a wealth of different ways. For instance, $\Delta_L(t)$ can be constructed using Seifert surfaces~\cite{Seifert}, the reduced Burau representation~\cite{Burau}, Fox calculus~\cite{FoxFree2}, Reidemeister torsion~\cite{MilnorDuality}, quantum invariants~\cite{KauffmanSaleur, DeguchiAkutsu} and Heegaard-Floer homology~\cite{OzsvathSzaboHolomorphic}.

These considerations extend to the multivariable case. Indeed, the \emph{multivariable Alexander polynomial} of an $n$-component ordered link $L$ is a Laurent polynomial $\Delta_L(t_1,\ldots,t_n) \in \Z[t_1^{\pm 1},\ldots,t_n^{\pm 1}]$ which is defined up to multiplication by powers of~$\pm t_i$. Analogously to the one variable case, $\Delta_L(t_1,\ldots,t_n)$ can be constructed using generalized Seifert surfaces~\cite{CimasoniPotential}, Fox calculus~\cite{FoxFree2}, the reduced Gassner representation~\cite{Birman}, Reidemeister torsion~\cite{TuraevReidemeister}, quantum invariants~\cite{Murakami} and Heegaard-Floer homology~\cite{OzsvathSzabo}.

Regardless of the number of variables, the Alexander polynomial is palindromic, i.e. it satisfies $\Delta_L(t_1^{-1},\ldots,t_n^{-1}) \stackrel{.}{=} \Delta_L(t_1,\ldots,t_n)$, where $\stackrel{.}{=}$ denotes equality up to multiplication by a unit of $\Z[t_1^{\pm 1},\ldots,t_n^{\pm 1}]$. Consequently, the difficulty in removing the indeterminacy lies in fixing a signed representative in~$\Z[t_1^{\pm 1/2},\ldots,t_n^{\pm 1/2}]$. In 1970, J. Conway~\cite{ConwayJohn} suggested such a representative (later called the~\emph{Conway potential function}) of the multivariable Alexander polynomial. Namely, the potential function of an $n$-component ordered link~$L$ is a rational function $\nabla_L(t_1,\ldots,t_n)$ which satisfies 
$$  \nabla_L(t_1,\ldots,t_n)=
\begin{cases}
\frac{1}{t_1-t_1^{-1}}\Delta_L(t_1^2) & \mbox{if }  n=1, \\
\Delta_L(t_1^2,\ldots,t_n^2)   & \mbox{if } n>1.\\ 
 \end{cases} $$ 
In the one variable case, J. Conway further defined the \emph{reduced polynomial} $D_L(t) \in \Z[t^{\pm 1}]$ of a link by setting $D_L(t)=(t-t^{-1})\nabla_L(t)$. The existence of this Laurent polynomial (which is now called the \emph{Alexander-Conway polynomial}) was first proved by Kauffman~\cite{KauffmanConway} using Seifert surfaces. Subsequent constructions involve quantum invariants~\cite{KauffmanSaleur}, Heegaard-Floer homology~\cite{OzsvathSzaboHolomorphic} and the Burau representation of the braid group~\cite[Section 3.4]{TuraevKassel}.

In the multivariable case, the existence of the potential function was first proved by Hartley~\cite{Hartley} using Fox calculus. Furthermore, $\nabla_L(t_1,\ldots,t_n)$ can currently be expressed by sign-refining the aforementioned constructions of~$\Delta_L(t_1,\ldots,t_n)$~\cite{CimasoniPotential, TuraevReidemeister, Murakami, BenheddiCimasoni}. 
In particular, generalizing the fact that the Alexander-Conway polynomial can be constructed using the reduced Burau representation, a multivariable formula is stated by Murakami~\cite[equation (6.10)]{Murakami}, see also Remark~\ref{rem:Murakami}.
\medbreak
In order to describe our main result in this setting, we start by recalling some notions related to the Gassner representation. In fact, since we wish to obtain statements which are valid both in the one variable case and in the multivariable case, we shall work with colored braids and colored links. A \emph{$\mu$-colored link} $L$ is an oriented link $L$ whose components are partitioned into~$\mu$ sublinks $L_1 \cup \ldots \cup  L_\mu$; colored braids are defined similarly: a braid~$\beta$ is \emph{$\mu$-colored} if each of its~$n$ components is assigned (via a surjective map) an element in~$\{1,2,\dots,\mu\}$.  Such a coloring results in two sequences~$c=(c_1,c_2,\dots,c_n)$ and~$c=(c_1',c_2',\dots,c_n')$ of integers: each sequence respectively encodes the colors of the top and bottom boundaries of the resulting \emph{$(c,c')$-braid}. If one fixes such a sequence~$c$, one obtains the group~$B_c$ of $(c,c)$-braids, see Subsection~\ref{sub:Braids} for details. As we shall review in Subsection~\ref{sub:ColoredGassner}, associating to each $n$-stranded $\mu$-colored $(c,c)$-braid its so-called \emph{reduced colored Gassner matrix} produces a homomorphism
$$ \overline{\mathcal{B}}_{(c,c)} \colon B_c \to GL_{n-1}(\Z[t_1^{\pm 1},\ldots,t_\mu^{\pm 1}]).$$
When $\mu=1$, one recovers the reduced Burau matrices~\cite{Burau}, while for $\mu=n$, one retrieves the reduced Gassner matrices~\cite{Birman}. The closure $\widehat{\beta}$ of a 
$(c,c)$-braid $\beta$ is a colored link and, as observed by Birman~\cite[Theorem 3.11]{Birman} and Morton \cite{MortonBraids}, if one uses~$I_k$ to denote the identity matrix of size $k$, then the relation between $\overline{\mathcal{B}}_{(c,c)}(\beta)$ and the Alexander polynomial reads as 
\begin{equation}
\label{eq:AlexanderIntro}
 \Delta_{\widehat{\beta}}(t_1,\ldots,t_\mu) \stackrel{.}{=}
\begin{cases}
\frac{t_1-1}{t_1^n-1}\det(\mathcal{\overline{\mathcal{B}}}_{(c,c)}(\beta)-I_{n-1}) & \mbox{if }  \mu=1, \\
 (t_{c_1}\cdots t_{c_n}-1)\det(\mathcal{\overline{\mathcal{B}}}_{(c,c)}(\beta)-I_{n-1}) & \mbox{if } \mu>1. \\ 
 \end{cases}
 \end{equation}
Finally, we introduce some additional notation. Any $(c,c)$-braid $\beta$ can be decomposed into a product $\prod_{j=1}^{m} \sigma_{i_j}^{\varepsilon_{j}}$, where each $\sigma_{i_j}$ denotes the $i_j$-th generator of the braid group (viewed as an appropriately colored braid) and each $\varepsilon_j$ is equal to $\pm 1$. For each $j$, use~$b_j$ to denote the color of the over-crossing strand in the generator $\sigma_{i_j}^{\varepsilon_{j}}$ and consider the Laurent monomial 
$$ \langle \beta \rangle:=\prod_{j=1}^m t_{b_j}^{-\varepsilon_j}.$$ 
Set $\Lambda_\mu:=\Z[t_1^{\pm 1},\ldots,t_\mu^{\pm 1}]$ and define $g \colon \Lambda_\mu \to \Lambda_\mu$ by extending $\Z$-linearly the group endomorphism of $\Z^\mu=\langle t_1,\ldots,t_\mu \rangle$ which sends $t_i$ to $t_i^2$.  Our main theorem reads as follows:

\begin{theorem}
\label{thm:Main}
Given an $n$-stranded $\mu$-colored $(c,c)$-braid $\beta$, the multivariable potential function of its closure $\widehat{\beta}$ can be described as:
\begin{equation}
\label{eq:Main}
\nabla_{\widehat{\beta}}(t_1,\ldots,t_\mu)=
 (-1)^{n+1} \cdot \frac{1}{t_{c_1} \cdots  t_{c_n} - t^{-1}_{c_1} \cdots  t^{-1}_{c_n}} \cdot \langle \beta \rangle \cdot g(\det(\overline{\mathcal{B}}_{(c,c)}(\beta) - I_{n-1})).
 \end{equation}
\end{theorem}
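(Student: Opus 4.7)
Write $\Phi(\beta)$ for the right-hand side of \eqref{eq:Main}. The plan is first to establish that $\Phi(\beta)$ descends to an invariant of the colored link $\widehat{\beta}$, and then to identify this invariant with the potential function $\nabla_{\widehat{\beta}}$. Note that applying the endomorphism $g$ to \eqref{eq:AlexanderIntro} and invoking the defining relation between $\Delta$ and $\nabla$ already yields $\Phi(\beta) \stackrel{.}{=} \nabla_{\widehat{\beta}}$, so the whole content of the theorem is to fix the residual ambiguity by a unit of $\Lambda_\mu$.

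For the first step, I would invoke the colored analogue of the Markov theorem, reducing matters to invariance of $\Phi$ under (i) conjugation inside $B_c$ and (ii) stabilization by a suitably colored $\sigma_n^{\pm 1}$. Conjugation invariance is essentially formal: $\overline{\mathcal{B}}_{(c,c)}$ is a homomorphism, $\det(M - I_{n-1})$ is conjugation invariant, and the contributions of a conjugator $\alpha$ and of $\alpha^{-1}$ to $\langle \alpha \beta \alpha^{-1} \rangle$ cancel letter by letter. Stabilization invariance, by contrast, is the substantive computation: using the explicit form of the reduced Gassner matrix of $\sigma_n^{\pm 1}$ and expanding $\det(\overline{\mathcal{B}}(\beta \sigma_n^{\pm 1}) - I_n)$ along the newly added row and column, one must check that the sign change $(-1)^{n+1} \mapsto (-1)^{n+2}$, the extra factor $t_{c_{n+1}}^{\mp 1}$ appearing in $\langle \beta \sigma_n^{\pm 1} \rangle$, and the extension of the normalizer to $t_{c_1} \cdots t_{c_{n+1}} - t_{c_1}^{-1} \cdots t_{c_{n+1}}^{-1}$ all conspire so that $\Phi(\beta \sigma_n^{\pm 1}) = \Phi(\beta)$.

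Once $\Phi$ is known to descend to a link invariant, its equality with $\nabla$ up to a unit of $\Lambda_\mu$ (from \eqref{eq:AlexanderIntro}) can be upgraded to genuine equality by invoking a uniqueness characterization for the potential function. The cleanest route is Hartley's axiomatic description of $\nabla$ via its value on the unknot together with a Torres-type formula under component deletion; both axioms would then be verified for $\Phi$ using the one-strand trivial braid and a block identity relating $\overline{\mathcal{B}}_{(c,c)}$ to its analogue for a smaller color set. Alternatively, Markov invariance can be used to reduce the check to a small generating family of braids (trivial and single-crossing braids on few strands), on which both sides can be computed by hand.

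The main obstacle will be the stabilization calculation of step one. The three delicate pieces of data in \eqref{eq:Main}---the sign $(-1)^{n+1}$, the Laurent monomial $\langle \beta \rangle$, and the precise normalizer---are forced exactly by how the extra row and column of $\overline{\mathcal{B}}_{(c,c)}(\sigma_n^{\pm 1})$ interact with $\overline{\mathcal{B}}_{(c,c)}(\beta)$ inside the determinant, and verifying that this cancellation goes through cleanly is the technical heart of the proof.
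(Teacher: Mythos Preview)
Your outline matches the paper's architecture—define the right-hand side as an expression in $\beta$, prove invariance under the colored Markov moves, then invoke a characterization of $\nabla$—but there are real gaps in both halves.

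For stabilization, the obstruction you have not identified is that the reduced Gassner matrix of the stabilized braid $i_{c_n}(\alpha)$ is \emph{not} block-diagonal: it has the shape $\left(\begin{smallmatrix}\overline{\mathcal{B}}_{(c,c)}(\alpha) & 0 \\ v & 1\end{smallmatrix}\right)$ with a row vector $v$ that is not an explicit function of $\overline{\mathcal{B}}_{(c,c)}(\alpha)$. After multiplying by $\overline{\mathcal{B}}(\sigma_n^{\pm 1})$ and subtracting $I_n$, this $v$ contaminates the last row, and a naive expansion along the new row or column leaves terms you cannot control. The paper closes this with a homological identity (its Lemma~\ref{lem:NoInfo}): the commuting square coming from the connecting map $\partial\colon H_1(\widehat{D}_c,P)\to H_0(P)$ forces
\[
\sum_{i=1}^{n-1}(t_{c_1}\cdots t_{c_i}-1)\,r_i \;=\; -(t_{c_1}\cdots t_{c_n}-1)\,v,
\]
where the $r_i$ are the rows of $\overline{\mathcal{B}}_{(c,c)}(\alpha)-I_{n-1}$. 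This is precisely what lets one eliminate $v$ by a row operation and extract the clean scalar factor matching the change in the normalizer. Without this lemma the stabilization computation does not close up; it is the missing ingredient in your sketch.

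For the identification step, the paper uses neither an ``unknot plus Torres'' characterization nor a check on a small family: it verifies Jiang's five local relations $(R1)$--$(R5)$, and the longer ones ($(R3)$ and $(R5)$ in particular) again rely on the same homological lemma. Your second alternative cannot work as stated: from~\eqref{eq:AlexanderIntro} you only get $\Phi(\beta)=u_L\cdot\nabla_L$ with $u_L$ a unit of $\Lambda_\mu$ that may depend on $L$, and agreement on finitely many links says nothing about $u_L$ in general. You genuinely need a characterization theorem (Jiang's, or the Morton--Hartley route of the appendix, which is a different argument altogether), and verifying its hypotheses is a substantially longer computation than the sketch suggests.
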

Theorem~\ref{thm:Main} has three main features. Firstly, it generalizes~\cite[Theorem 3.13]{TuraevKassel} (which deals with the Alexander-Conway polynomial and the Burau representation) to the multivariable case. Secondly, it sign-refines the relation, described in~(\ref{eq:AlexanderIntro}), between the colored Gassner representation and the multivariable Alexander polynomial. Thirdly, it provides an efficient method to compute the multivariable potential function (e.g. by sign refining Morton and Hodgson's algorithm~\cite{MortonHodgson}).

\begin{remark}
\label{rem:Murakami}
As we mentioned above, apart from relating the multivariable potential function to quantum invariants, Murakami also states a formula similar to~(\ref{eq:Main}) in~\cite[equation~(6.10)]{Murakami}. Unfortunately, the sign $(-1)^{n+1}$ does not appear and, in particular, the resulting polynomial is not invariant under the second Markov move. Regardless of this sign issue, Murakami refers to~\cite[equation (2.4)]{Hartley} for a proof of his claim (i.e. for the proof of~\cite[equation~(6.10)]{Murakami}). As it turns out, combining others parts of~\cite{Hartley} with Morton's work~\cite{MortonBraids} does indeed provide a shorter proof of Theorem~\ref{thm:Main} than the one given in Section~\ref{sec:Thm}. This proof is discussed in Appendix~\ref{Appendix} and was generously provided by an anonymous referee.
\end{remark}

The proof of Theorem~\ref{thm:Main} uses a blend of Jiang's axiomatic characterization of $\nabla_L$~\cite{Jiang}, the homological interpretation of the reduced colored Gassner matrices~\cite{KirkLivingstonWang} and ideas of~\cite{TuraevKassel}. More precisely, given a colored link $L$, we use the colored version of the classical theorem of Alexander~\cite{AlexanderClosure} in order to write $L$ as the closure of a colored braid $\beta$. We then associate to~$L$ a rational function $f_L$ which is defined in terms of the reduced colored Gassner representation~$\overline{\mathcal{B}}_{(c,c)}(\beta)$. The fact that this construction provides a well-defined link invariant follows from the colored version of Markov's theorem~\cite{Markov} coupled with homological considerations. Finally, we check that $f_L$ satisfies Jiang's five axioms~\cite{Jiang} which characterize the potential function $\nabla_L$.
\medbreak

The careful reader might have noticed that (up to now) we have only discussed the reduced colored Gassner \emph{matrices}, intentionally avoiding to mention the reduced colored Gassner \emph{representation}. Indeed the latter terminology already refers to a slightly different object which appears in \cite{KirkLivingstonWang, CimasoniConwayGG, CimasoniTuraev, CimasoniConway2Functor}. The aim of the second part of this paper is to clarify the relation between these two objects as well as to provide a more intrinsic description of the reduced colored Gassner matrices. Let us give a brief outline of our results on these issues.

Let $D_n$ denote the $n$ times punctured disk and use $x_1,\ldots,x_n$ to denote the generators of~$\pi_1(D_n,z)$ depicted in Figure~\ref{fig:DiskGeneratorsThesis} (this figure also shows the basepoint $z \in \partial D_n$). Given a sequence $(c_1,\ldots,c_n)$ of integers in $\lbrace 1,\ldots,\mu \rbrace$, consider the regular cover $p \colon \widehat{D}_n \to D_n$ corresponding to the kernel of the homomorphism $\pi_1(D_n) \to \Z^\mu, x_i \mapsto t_{c_i}$. Each braid $\beta$ can be represented by an orientation preserving homeomorphism $h_\beta$ of $D_n$ fixing $\partial D_n$ pointwise. The \emph{unreduced colored Gassner representation}
$$ \mathcal{B}_{(c,c)} \colon B_c \to \operatorname{Aut}_{\Lambda_\mu}(H_1(\widehat{D}_n,p^{-1}(\{ z \})))$$
is obtained by lifting $h_\beta$ to a homeomorphism $\widetilde{h}_\beta \colon \widehat{D}_n \to \widehat{D}_n$ and defining $\mathcal{B}_{(c,c)} (\beta)$ as the induced $\Lambda_\mu$-linear homomorphism on $H_1(\widehat{D}_n,p^{-1}(\{ z \})).$
 
This intrinsic definition contrasts sharply with the coordinate-dependent description of the reduced colored Gassner matrices~\cite{Birman}. Indeed, for $i=1,\ldots,n$, lifts $\widetilde{g}_i$ of the loops $g_i:=x_1 \cdots x_i$ to $\widehat{D}_n$ provide a free basis for $H_1(\widehat{D}_n,p^{-1}(\{ z \}))$ and the \emph{reduced colored Gassner matrix} of $\beta$ is defined as the restriction of $\mathcal{B}_{(c,c)}(\beta)$ to the free $\Lambda_\mu$-module generated by $\widetilde{g}_1,\ldots,\widetilde{g}_{n-1}.$

One might conjecture that the reduced colored Gassner matrices simply represent the $\Lambda_\mu$-automorphism of $H_1(\widehat{D}_n)$ induced by $\widetilde{h}_\beta$. While this is true for $\mu=1$, it cannot hold for $\mu>2$: the former $\Lambda_\mu$-module is not free. For this reason, one considers the localization $\Lambda_S$ of $\Lambda_\mu$ with respect to the multiplicative subset generated by $S=\lbrace 1-t_1,\ldots,1-t_\mu \rbrace$. Indeed, it now turns out that $\Lambda_S \otimes_{\Lambda_\mu} H_1(\widehat{D}_n)$ is free of rank $n-1$ and the \emph{reduced colored Gassner representation}
$$ B_c \to \operatorname{Aut}_{\Lambda_\mu}(\Lambda_S \otimes_{\Lambda_\mu} H_1(\widehat{D}_n))$$
is defined by considering the $\Lambda_S$-linear map induced by $\widetilde{h}_\beta$ on $\Lambda_S \otimes_{\Lambda_\mu} H_1(\widehat{D}_n)$ (note that Kirk-Livingston-Wang~\cite[Definition 2.2]{KirkLivingstonWang} initially defined this representation over the field of fractions $Q$ of $\Lambda_\mu$). In order to state our second result, we introduce one last piece of terminology: we write $\partial \widehat{D}_n \to \partial D_n$ for the restriction of the cover to $\partial D_n$ and we refer to the $\Lambda_S$-linear map induced by $\widetilde{h}_\beta$ on $\Lambda_S \otimes_{\Lambda_\mu} H_1(\widehat{D}_n,\partial \widehat{D}_n)$ as the \emph{map induced by the braid $\beta$}.

Our second result reads as follows.
\begin{theorem}
\label{thm:Second}
Given a $(c,c)$-braid $\beta$ with $n$ strands, the following statements hold:
\begin{enumerate}
\item The map induced by $\beta$ on $\Lambda_S \otimes_ {\Lambda_\mu} H_1(\widehat{D}_n,\partial \widehat{D}_n)$ is represented by the reduced colored Gassner matrix $\overline{\mathcal{B}}_{(c,c)}(\beta)$.
\item The inclusion induced homomorphism $\Phi \colon \Lambda_S \otimes_{\Lambda_\mu} H_1(\widehat{D}_n) \to \Lambda_S \otimes_{\Lambda_\mu} H_1(\widehat{D}_n,\partial \widehat{D}_n)$ intertwines the reduced colored Gassner representation with the map induced by~$\beta$. Furthermore, after tensoring with $Q$, the induced map $\operatorname{id}_Q \otimes \Phi$ is an isomorphism which conjugates the two representations.
\end{enumerate}
\end{theorem}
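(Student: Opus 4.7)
The plan is to read off both parts from exact sequences in the $\Lambda_\mu$-equivariant homology of $\widehat{D}_n$, after localizing at $S$ or $Q$.

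For part (1), I would identify $\Lambda_S \otimes_{\Lambda_\mu} H_1(\widehat{D}_n, \partial \widehat{D}_n)$ with the quotient of $\Lambda_S^n = \Lambda_S \otimes_{\Lambda_\mu} H_1(\widehat{D}_n, p^{-1}(\{z\}))$ by the submodule generated by $\widetilde{g}_n$. The natural map between these two relative modules is induced by the inclusion of pairs $(\widehat{D}_n, p^{-1}(\{z\})) \hookrightarrow (\widehat{D}_n, \partial \widehat{D}_n)$ and sits in the long exact sequence of the triple $p^{-1}(\{z\}) \subset \partial \widehat{D}_n \subset \widehat{D}_n$. The main computational step is to show that $H_1(\partial \widehat{D}_n, p^{-1}(\{z\})) \cong \Lambda_\mu$, generated by a lift of the outer boundary loop $g_n = x_1 \cdots x_n$, while $H_0(\partial \widehat{D}_n, p^{-1}(\{z\})) \cong \bigoplus_{i=1}^{n} \Lambda_\mu/(t_{c_i} - 1)$. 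Both facts reduce to the observation that the preimage in $\widehat{D}_n$ of each boundary circle is a disjoint union of copies of $\R$ (since each $t_{c_i}$ and the product $t_{c_1} \cdots t_{c_n}$ have infinite order in $\Z^\mu$) and that only the outer circle contains lifts of $z$. Each summand of $H_0(\partial \widehat{D}_n, p^{-1}(\{z\}))$ vanishes after applying $\Lambda_S \otimes_{\Lambda_\mu} -$, so the long exact sequence collapses to a short exact sequence
\[
0 \to \Lambda_S \cdot \widetilde{g}_n \to \Lambda_S^n \to \Lambda_S \otimes_{\Lambda_\mu} H_1(\widehat{D}_n, \partial \widehat{D}_n) \to 0,
\]
exhibiting the classes of $\widetilde{g}_1, \ldots, \widetilde{g}_{n-1}$ as a $\Lambda_S$-basis of the target. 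Normalizing $\widetilde{h}_\beta$ to fix a chosen lift of the basepoint ensures that it fixes $\widetilde{g}_n$, so its action on the quotient coincides in this basis with the reduced colored Gassner matrix by definition.

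For part (2), I would first establish the intertwining statement by naturality: since $h_\beta$ fixes $\partial D_n$ pointwise, $\widetilde{h}_\beta$ restricts to a self-map of $\partial \widehat{D}_n$, and the inclusion-induced map $\Phi$ intertwines the two induced automorphisms by functoriality of $H_1$ on pairs. The isomorphism statement reduces, via the long exact sequence of the pair $(\widehat{D}_n, \partial \widehat{D}_n)$ and the flatness of $Q$ over $\Lambda_\mu$, to showing that $Q \otimes_{\Lambda_\mu} H_k(\partial \widehat{D}_n) = 0$ for $k = 0, 1$ and that $Q \otimes_{\Lambda_\mu} H_0(\widehat{D}_n) = 0$. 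The $H_1$ vanishing follows from the fact that each component of the preimage of a boundary circle is a copy of $\R$; the $H_0$ vanishings follow because every cyclic summand has the form $\Lambda_\mu/(u - 1)$ for a nontrivial element $u \in \Z^\mu$, and such a cyclic module becomes zero in $Q$ since $u - 1$ is a nonzero, hence invertible, element of the field.

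The most delicate step is the calculation of $H_*(\partial \widehat{D}_n, p^{-1}(\{z\}))$ in part (1), which requires a careful analysis of how the cover restricts to the outer boundary circle versus the puncture boundary circles and an explicit identification of the generator of the relative $H_1$ with $\widetilde{g}_n$. Once this is in place, both parts of the theorem follow from routine long-exact-sequence manipulations combined with naturality.
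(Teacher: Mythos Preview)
Your approach is essentially the same as the paper's: both parts are read off from the long exact sequences of the triple $(\widehat{D}_n,\partial\widehat{D}_n,P)$ and of the pair $(\widehat{D}_n,\partial\widehat{D}_n)$, together with naturality under $\widetilde{h}_\beta$.

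There is, however, a convention mismatch worth correcting. In this paper $D_n=D^2\setminus\{p_1,\dots,p_n\}$ has only \emph{one} boundary circle: the punctures are points, and the authors explicitly stipulate $\partial D_n=\partial D^2$. Consequently there are no ``puncture boundary circles'', and your computation $H_0(\partial\widehat{D}_n,p^{-1}(\{z\}))\cong\bigoplus_{i=1}^{n}\Lambda_\mu/(t_{c_i}-1)$ is not right here; in fact $H_0(\partial\widehat{D}_n,P)=0$ already over $\Lambda_\mu$ (give $\partial D_n$ the cell structure with unique $0$-cell $z$, so the $0$-skeleton of $\partial\widehat{D}_n$ is exactly $P$). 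Your argument still goes through because you localize and claim these putative summands die, but the localization is unnecessary: the short exact sequence
\[
0\to H_1(\partial\widehat{D}_n,P)\to H_1(\widehat{D}_n,P)\to H_1(\widehat{D}_n,\partial\widehat{D}_n)\to 0
\]
holds over $\Lambda_\mu$ itself, and the paper uses this to deduce the stronger fact that $H_1(\widehat{D}_n,\partial\widehat{D}_n)$ is already free over $\Lambda_\mu$ with basis $\pi(\widetilde{g}_1),\dots,\pi(\widetilde{g}_{n-1})$. Similarly, in part~(2) there is only the single summand $H_0(\partial\widehat{D}_n)\cong\Lambda_\mu/(t_{c_1}\cdots t_{c_n}-1)$ to kill over $Q$, and the vanishing of $Q\otimes H_0(\widehat{D}_n)$ is not needed. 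Once you adjust to the paper's convention, your outline and the paper's proof coincide.
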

Summarizing, Theorem~\ref{thm:Second} not only clarifies the relation between the several natural definitions of the ``reduced colored Gassner representation" which have appeared in the literature, it also gives a more intrinsic definition of the reduced colored Gassner matrices which are used in Theorem~\ref{thm:Main}. Conversely, note that Theorem~\ref{thm:Second} can also be viewed as providing a practical way of computing the reduced colored Gassner representation. Finally, note that the second point of Theorem~\ref{thm:Second} implies that Theorem~\ref{thm:Main} also holds for the reduced colored Gassner representation: indeed since both representations are conjugated over $Q$, their determinants agree.
 \medbreak
This paper is organized as follows. Section~\ref{sec:Prelim} reviews colored braids and the colored Gassner representation, Section~\ref{sec:Thm} gives the proof of Theorem~\ref{thm:Main} and Section~\ref{sec:Homology} provides the proof of Theorem~\ref{thm:Second}.

\section*{Acknowledgments}
Both authors wish to thank David Cimasoni for suggesting the project and for several helpful discussions. We are particularly grateful to two anonymous referees: the first pointed us toward~\cite[equation (6.10)]{Murakami}, while the second provided us with a second shorter proof of Theorem~\ref{thm:Main}. The first named author was supported by the NCCR SwissMap funded by the Swiss FNS.

\section{Colored braids and the colored Gassner representation}
\label{sec:Prelim}

\subsection{Colored braids}
\label{sub:Braids}

Following Birman \cite{Birman}, we start by recalling some well-known properties of the braid group. Afterwards, we discuss colored braids, following the conventions of~\cite{ConwayTwistedBurau}.
\medbreak
Let $D^2$ be the closed unit disk in $\mathbb{R}^2.$ Fix a set of $n \geq 1$ punctures $p_1,p_2,\ldots,p_n$ in the interior of $D^2$. We shall assume that the $p_i$ lie in $(-1,1)=\operatorname{Int}(D^2) \cap \mathbb{R}$ and $p_1<p_2<\ldots<p_n.$ A \emph{braid with $n$ strands} is an oriented $n$-component one-dimensional smooth submanifold $\beta$ of the cylinder $D^2 \times [0,1]$ whose oriented boundary is $\bigsqcup_{i=1}^n (p_i \times \lbrace 0) \rbrace \sqcup \left(-\bigsqcup_{i=1}^n (p_i \times \lbrace 1 \rbrace) \right)$, and where the projection to $[0,1]$ maps each component of $\beta$ homeomorphically onto $[0,1]$. Two braids $\beta_1$ and $\beta_2$ are \textit{isotopic} if there is a self-homeomorphism of $D^2 \times [0,1]$ which keeps $\partial (D^2 \times [0,1])$ fixed, such that $h(\beta_1)=\beta_2$. The \textit{braid group} $B_n$ consists of the set of isotopy classes of braids.  The identity element is given by the \textit{trivial braid} $\lbrace p_1,p_2,\dots, p_n \rbrace \times [0,1]$ while the composition of $\beta_1 \beta_2$ consists in gluing $\beta_1$ on top of $\beta_2$ and shrinking the result by a factor $2$ as in Figure \ref{fig:CompositionThesisColor}. 

\begin{figure}[t]
\includegraphics[scale=0.7]{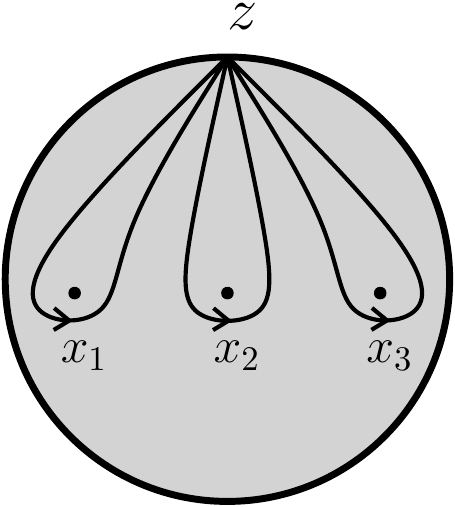}
\caption{The punctured disk $D_3$. }
\label{fig:DiskGeneratorsThesis}
\end{figure}

The braid group~$B_n$ can also be identified with the group of  isotopy classes of orientation-preserving homeomorphisms of~$D_n:=D^2 \setminus \lbrace p_1,\dots, p_n \rbrace$ fixing the boundary pointwise (note that with our conventions, the punctures do not contribute any boundary components: $\partial D_n=\partial D^2$). To understand this fact, first note that a braid $\beta$ induces a deformation retraction of its exterior $X_\beta:=(D^2 \times [0,1]) \setminus \nu \beta$ onto $D_n \times \lbrace 0 \rbrace$. Denoting this retraction by $H_\beta \colon X_\beta \times [0,1] \to X_\beta$, it turns out that the isotopy class (rel $\partial D^2$) of the orientation-preserving homeomorphism $h_\beta \colon D_n \times \lbrace 1 \rbrace \to D_n \times \lbrace 0 \rbrace, x \mapsto H_\beta(x,1)$ depends only on the isotopy class of the braid (see~\cite{Birman} for details).

\begin{figure}[!htb]
\includegraphics[scale=0.7]{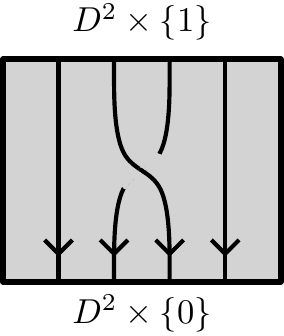}
\caption{The generator $\sigma_2$ of $B_4$. }
\label{fig:GeneratorThesis}
\end{figure}

Either way, $B_n$ admits a presentation with $n-1$ generators $\sigma_1,\sigma_2, \dots, \sigma_{n-1}$ subject to the relations $\sigma_i \sigma_{i+1} \sigma_i=\sigma_{i+1} \sigma_i \sigma_{i+1}$ for each $i$, and $\sigma_i \sigma_j = \sigma_j \sigma_i$ if $|i-j|>2$. Topologically, the generator $\sigma_i$ is the braid whose $i$-th component passes over the $i+1$-th component as shown in Figure~\ref{fig:GeneratorThesis}. Sending a braid to its underlying permutation produces a surjection from the braid group into the symmetric group. The kernel $P_n$ of this map is called the \textit{pure braid group}. 

\begin{remark}
\label{rem:ConventionGenerator}
Although we have chosen to follow Birman's convention regarding the topological interpretation of $\sigma_i$~\cite{Birman}, this convention is by no means standard: the opposite convention is also widespread in the literature. To only name two examples, both Morton's article~\cite{MortonBraids} and Birman and Brendle's survey~\cite{BirmanBrendle} assume that $\sigma_i$ is represented by the braid whose $i$-th component passes \emph{under} the $i+1$-th component.
\end{remark}

Fix a base point $z$ of $D_n$ in $\partial D_n$ and denote by $x_i$ the simple loop based at $z$ turning once around $p_i$ counterclockwise for $i=1,2,\dots, n$ as in Figure \ref{fig:DiskGeneratorsThesis}. The group $\pi_1(D_n,z)$ can then be identified with the free group $F_n$ on the $x_i.$ 
If $h_\beta$ is a homeomorphism of $D_n$ representing a braid $\beta$, then the induced automorphism $h_{\beta*}$ of the free group $F_n$ only depends on $\beta$. It follows from the way we compose braids that $h_{(\gamma \beta)*}=h_{\beta*}h_{\gamma*}$, and the resulting \emph{anti}-representation $B_n \to \operatorname{Aut}(F_n)$ can be explicitly described by
$$ (h_{\sigma_i})_*x_j=
\begin{cases}
x_i x_{i+1} x_i^{-1}  & \mbox{if }  j=i, \\
 x_i                            & \mbox{if }  j=i+1, \\ 
x_j                             & \mbox{otherwise. } \\ 
 \end{cases} $$
The \textit{closure} of a braid $\beta$ is the link $\widehat{\beta}$ obtained from $\beta$ by adding parallel strands in $S^3 \setminus (D^2 \times [0,1])$ as in Figure \ref{fig:ClosureThesis}. While Alexander's theorem~\cite{AlexanderClosure} ensures that every link can be obtained as the closure of a braid, the correspondence between braids and links is not one-to-one: non-isotopic braids can have isotopic closures. As we shall recall below, Markov's theorem~\cite{Markov} describes a complete set of moves which relates  braids whose closures are isotopic.

\begin{remark} \label{rem:proofAlexander} 
In fact, a close inspection of the proof of Alexander's theorem leads to the following refined statement. If an oriented link contains a braid~$\alpha$ in a small cylinder, then it can be obtained as the closure of a braid which contains $\alpha$ in a small cylinder (with orientations as shown in Figure~\ref{fig:ClosureThesis} below).
\end{remark}

 \begin{figure}[!htb]
\centering
\includegraphics[scale=0.7]{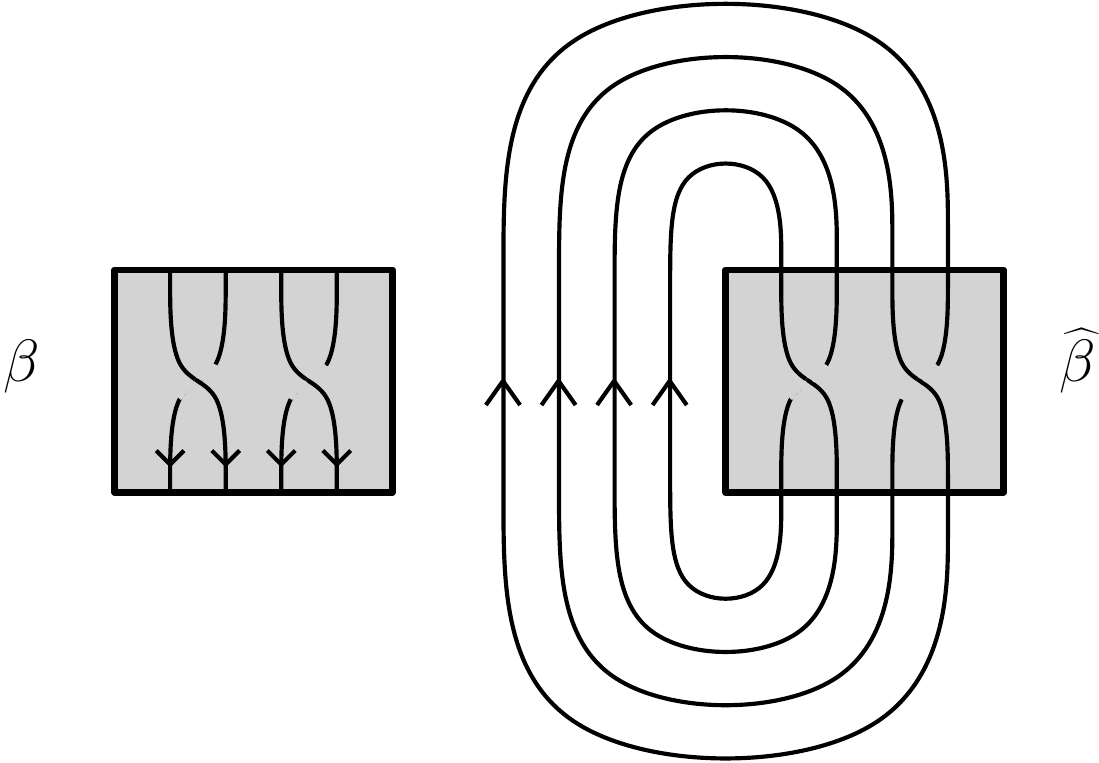}
\caption{The closure of a braid.}
\label{fig:ClosureThesis}
\end{figure}

\begin{figure}[t]
\centering
\includegraphics[scale=0.7]{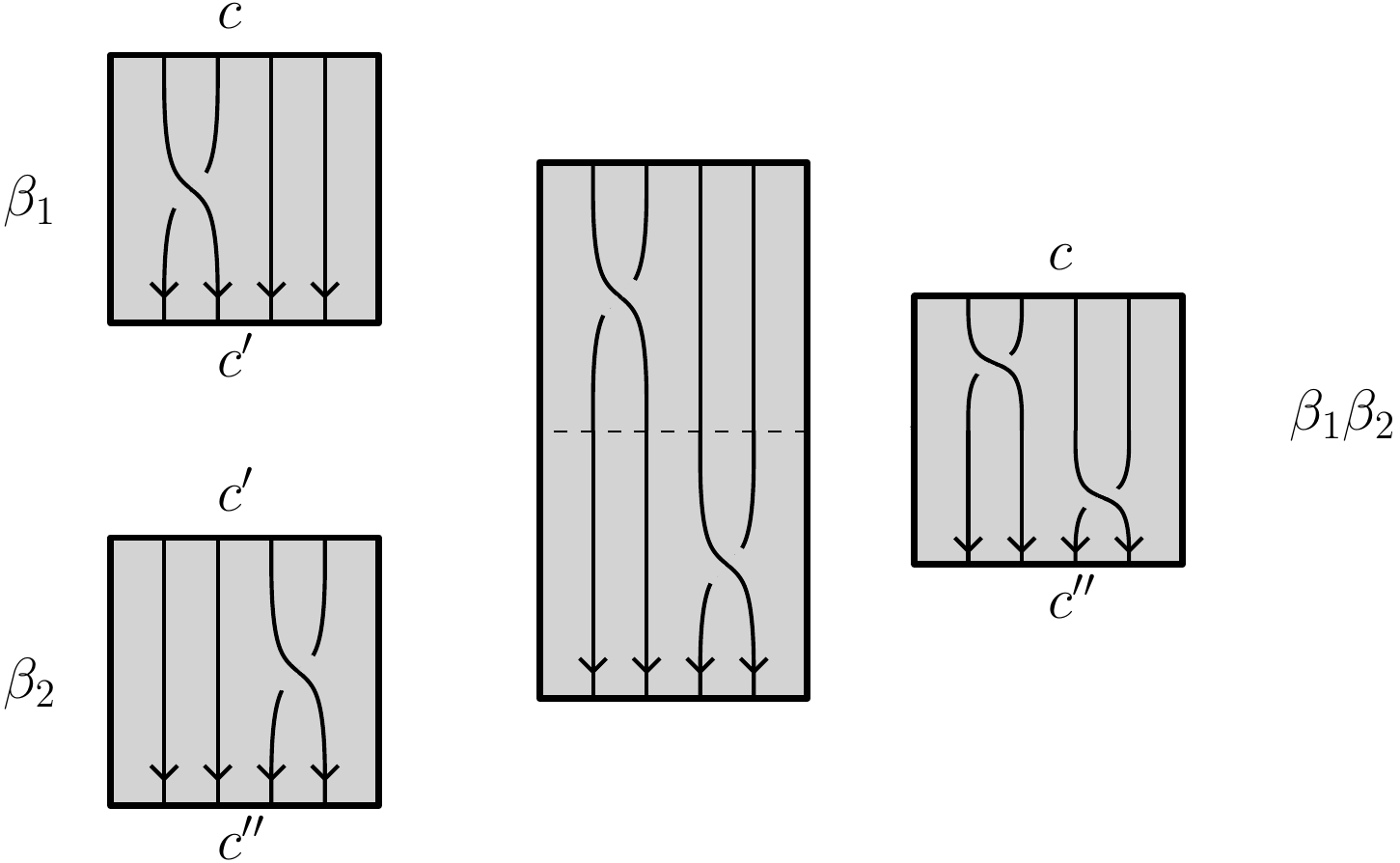}
\caption{A $(c,c')$-braid $\beta_1$, a $(c',c'')$-braid $\beta_2$ and their composition, the $(c,c'')$-braid $\beta_1\beta_2$.}
\label{fig:CompositionThesisColor}
\end{figure}

A braid~$\beta$ is{\em~$\mu$-colored\/} if each of its components is assigned (via a surjective map) an integer in~$\{1,2,\dots,\mu\}$ (which we call a \emph{color}). A~$\mu$-colored braid induces a coloring on the punctures of $D^2 \times \lbrace 0, 1 \rbrace$. For emphasis, we shall denote the resulting punctured disks by~$D_c$ and~$D_{c'}$, and call a~$\mu$-colored braid a~\emph{$(c,c')$-braid}, where~$c$ and~$c'$ are the sequences of~$ 1, 2,\dots,\mu$ induced by the coloring of the braid. Two colored braids are isotopic if the underlying isotopy is color preserving, and we shall denote by~$\id_c$ the isotopy class of the trivial~$(c,c)$-braid. The composition of a $(c,c')$-braid $\beta_1$ with a $(c',c'')$-braid $\beta_2$ is the $(c,c'')$-braid $\beta_1 \beta_2$ depicted in Figure~\ref{fig:CompositionThesisColor}. Thus, for any sequence~$c$, the set~$B_c$ of isotopy classes of~$(c,c)$-braids is a group which interpolates between the braid group $B_n=B_{(1,1,\dots, 1)}$ and the pure braid group~$P_n=B_{(1,2,\dots,n)}$. Additionally, we shall often use the map $i_{c_{n+1}} \colon B_c \hookrightarrow B_{(c_1, \ldots, c_n, c_{n+1})}$ which sends $\alpha$ to the disjoint union of $\alpha $ with a trivial strand of color $c_{n+1}$, see Figure~\ref{fig:Inclusion}. Here, note that $c_{n+1}$ can very well be equal to one of the~$n$ first~$c_i$'s.

Finally, the closure of a $\mu$-colored braid $\beta$ is the $\mu$-colored link $\widehat{\beta}$ obtained from $\beta$ by adding colored parallel strands in $S^3 \setminus (D^2 \times [0,1]).$ We refer to~\cite[Theorem 3.3]{Murakami} for the colored version of Alexander's theorem (which states that every colored link can be obtained as the closure of a colored braid) and instead focus on the colored version of Markov's theorem, referring to~\cite[Theorem 3.5]{Murakami} for details.

\begin{proposition}
\label{prop:MarkovColored}
Two $(c,c)$-braids have isotopic closures if and only if they are related by a sequence of the following moves and their inverses:	
\begin{enumerate}
\item replace $\alpha \beta$ by $\beta \alpha$, where $\alpha$ is a $(c,c')$-braid and $\beta$ is a $(c',c)$-braid,
\item replace $\alpha$ by $\sigma^{\varepsilon}_n i_{c_n}(\alpha)$, where $\alpha$ is a $(c,c)$-colored braid with $n$ strands, $\sigma_n$ is viewed as a $((c_1, \ldots , c_n, c_n),(c_1, \ldots , c_n, c_n))$-braid, and $\varepsilon$ is equal to $\pm 1$.
\end{enumerate}
\end{proposition}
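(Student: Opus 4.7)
The plan is to reduce the colored version of Markov's theorem to the classical (uncolored) Markov theorem, relying on the colored version of Alexander's theorem (cited in the excerpt as \cite[Theorem 3.3]{Murakami}) and on a careful bookkeeping of the colors at each step.

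For the sufficiency direction (related by the two moves implies isotopic colored closures), I would verify that each move preserves the colored isotopy class of the closure. Move~(1) is a conjugation-type identity: the braids $\alpha\beta$ and $\beta\alpha$, viewed inside $D^2 \times [0,1]$, have the same closure up to a cyclic rearrangement of the cylinder, and since $\alpha$ is a $(c,c')$-braid while $\beta$ is a $(c',c)$-braid, the colors of strands being glued match along the common boundary, so the ambient isotopy $\widehat{\alpha\beta} \simeq \widehat{\beta\alpha}$ is color preserving. Move~(2) realises a colored stabilization: the trivial strand of color $c_n$ added via $i_{c_n}$ and braided with $\sigma_n^{\pm 1}$ introduces, after closure, precisely a Reidemeister~I kink on the component of color $c_n$, producing a colored link isotopic to $\widehat{\alpha}$.

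For the necessity direction, suppose two $(c,c)$-braids $\beta_1$ and $\beta_2$ have isotopic colored closures. Forgetting colors, one obtains uncolored braids with isotopic uncolored closures, so the classical Markov theorem supplies a finite sequence of conjugations and stabilizations relating them. The task is then to lift this sequence to the colored category, i.e.\ to equip every intermediate braid with a coloring compatible with the others so that each elementary move takes the form (1) or (2). The coloring of the new strand introduced at a stabilization is forced to match that of the $n$-th strand in order for the stabilized closure to be $\mu$-colored, and at each conjugation $\gamma = \alpha\beta = \beta'\alpha'$ the intermediate color sequence $c'$ is determined by the underlying braid structure.

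The main obstacle is justifying the existence of such a colored lift: an a priori uncolored Markov sequence does not come with intermediate colorings, and one must argue that a coherent set of colorings always exists. The cleanest route, and the one I would take, is to reprove Markov's theorem directly in the colored category, following a geometric proof (in the style of Birman or Traczyk) that presents the isotopy between $\widehat{\beta_1}$ and $\widehat{\beta_2}$ as a finite sequence of local diagrammatic moves on braided link diagrams, where each braid diagram is produced by Alexander's braiding procedure (see Remark~\ref{rem:proofAlexander}). Since the whole isotopy is color preserving by hypothesis, each local move automatically respects colors, and the colorings of the intermediate braids are uniquely determined by the coloring of the link. Recording the resulting sequence yields exactly a finite composition of the colored moves (1) and (2) and their inverses.
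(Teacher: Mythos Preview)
The paper does not actually supply a proof of this proposition: immediately before stating it, the authors write that they are ``referring to~\cite[Theorem 3.5]{Murakami} for details,'' and no argument follows the statement. So there is nothing in the paper to compare your proposal against beyond that citation.

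Your outline is therefore already more than the paper offers, and it is broadly correct in spirit. The sufficiency direction is unproblematic, and you rightly identify the real issue in the necessity direction: an uncolored Markov sequence need not lift naively to a colored one, so the honest route is to rerun a geometric proof of Markov's theorem (\`a la Birman or Traczyk) while tracking colors throughout. That is indeed how such results are established in the literature, and it is presumably what Murakami does. If you want a self-contained argument rather than a citation, your second strategy is the one to pursue; the first strategy (lift an arbitrary uncolored Markov sequence) is, as you note, not obviously salvageable without essentially redoing the proof anyway.
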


\begin{figure}[!h]
\centering
\includegraphics[scale=0.7]{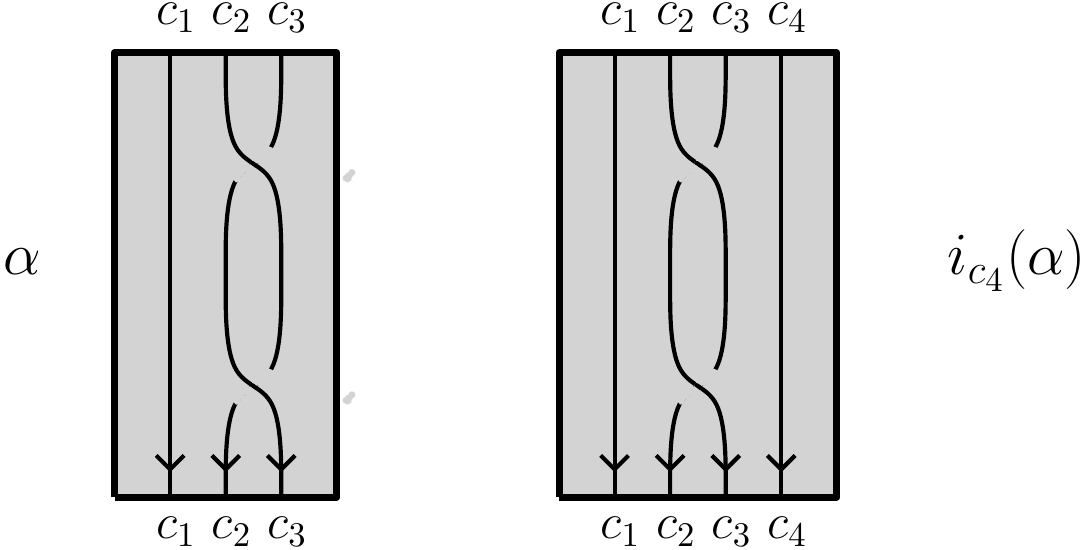}
\caption{An example of the inclusion map $i_{c_4}$.}
\label{fig:Inclusion}
\end{figure}

\subsection{The colored Gassner representation}
\label{sub:ColoredGassner}

In this subsection, we review the homological definition of the unreduced colored Gassner representation (following~\cite{KirkLivingstonWang, TuraevFaithful, ConwayTwistedBurau}) and of the reduced colored Gassner matrices (following~\cite{Birman,MortonBraids, Murakami, ConwayTwistedBurau}). A more leisurely exposition can also be found in~\cite[Chapter 9]{ConwayThesis}. It must however be mentioned that our conventions are actually closest to those used in~\cite{BenAribiConway}; in particular the unreduced colored Gassner representation is in fact an \emph{anti}-representation. Other appearances of the colored Gassner representation include work of Penne~\cite{Penne1, Penne2}.
\medbreak
Fix a sequence $(c_1,\ldots,c_n)$ of elements in $\lbrace 1,\ldots, \mu \rbrace$ and a basepoint $z$ of the punctured disk~$D_c$ which lies in $\partial D_c$. Consider the map $\psi_c \colon \pi_1(D_c) \rightarrow \Z^\mu=\langle t_1,\dots,t_\mu \rangle$ which sends each~$x_i$ to $t_{c_i}$. Let $\widehat{D}_c \rightarrow D_c$ be the regular cover corresponding to $\ker(\psi_c)$ and let $P$ be the fiber over $z$. The homology groups of $\widehat{D}_c $ are naturally modules over $\Lambda_\mu=\Z[t_1^{\pm 1},\dots,t_\mu^{\pm 1}]$. Given a homeomorphism $h_\alpha \colon D_c \rightarrow D_{c'}$ representing a $(c,c')$-braid $\alpha$, one can check that $h_\alpha $ lifts to a unique homeomorphism $\widetilde{h}_\alpha \colon \widehat{D}_c  \rightarrow \widehat{D}_{c'}$ fixing $P=P'$ pointwise. Taking the induced map on homology produces a well-defined $\Lambda_\mu$-homomorphism
 $$ \mathcal{B}_{(c,c')}(\alpha)  \colon H_1(\widehat{D}_c ,P) \rightarrow H_1(\widehat{D}_{c'},P').$$
In the case where $c=c'$, we obtain a map $B_c \to \operatorname{Aut}_{\Lambda_\mu}(H_1(\widehat{D}_c,P))$ which we call the \textit{unreduced colored Gassner representation}. When $\mu=1$, the unreduced colored Gassner representation recovers the unreduced Burau representation of the braid group $B_n$ while if $\mu=n$, we retrieve the unreduced Gassner representation of the pure braid group described in~\cite{Birman}, see~\cite{ConwayTwistedBurau} and~\cite[Chapter 9]{ConwayThesis} for details.

Since the proof of the following proposition can be found in~\cite{ConwayTwistedBurau}, we only sketch it here.

\begin{proposition}
\label{prop:GassnerCocycle}
Given a $(c,c')$-braid $\beta$ and a $(c',c'')$-braid $\gamma$, we have
$$  \mathcal{B}_{(c,c'')}(\beta \gamma)=\mathcal{B}_{(c',c'')}(\gamma)\mathcal{B}_{(c,c')}(\beta).$$
In particular, $\mathcal{B}_{(c',c)}(\beta^{-1})=\mathcal{B}_{(c,c')}(\beta)^{-1}$ and, restricting to $(c,c)$-braids, $\mathcal{B}_{(c,c)}$ is an anti-representation.
\end{proposition}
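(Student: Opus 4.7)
The plan is to trace the stated cocycle identity back to an analogous statement at the level of homeomorphisms and then transport it to the covers via uniqueness of lifts.

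First, I would observe that the composition of braids corresponds to the \emph{opposite} composition of their associated homeomorphisms. Concretely, if $\beta$ is a $(c,c')$-braid and $\gamma$ is a $(c',c'')$-braid, then $\beta\gamma$ is obtained by stacking $\beta$ on top of $\gamma$, so tracing through the deformation retraction $H_{\beta\gamma}$ recalled in Subsection~\ref{sub:Braids} (exactly as was done in the excerpt for the induced maps on the free group $F_n$) yields
$$ h_{\beta\gamma} \;=\; h_\gamma \circ h_\beta \colon D_c \longrightarrow D_{c''}. $$
Next, I would lift this identity to the covers. By construction, $\widetilde{h}_\beta \colon \widehat{D}_c \to \widehat{D}_{c'}$ and $\widetilde{h}_\gamma \colon \widehat{D}_{c'} \to \widehat{D}_{c''}$ are the unique lifts of $h_\beta$ and $h_\gamma$ fixing the (canonically identified) fibers $P = P' = P''$ over $z$ pointwise. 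Since composition of covering maps is a covering map, $\widetilde{h}_\gamma \circ \widetilde{h}_\beta$ is a lift of $h_\gamma \circ h_\beta = h_{\beta\gamma}$ that again fixes $P$ pointwise, so by uniqueness
$$ \widetilde{h}_{\beta\gamma} \;=\; \widetilde{h}_\gamma \circ \widetilde{h}_\beta. $$

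Applying the functor $H_1(-,P)$ and using functoriality then gives
$$ \mathcal{B}_{(c,c'')}(\beta\gamma) \;=\; (\widetilde{h}_\gamma)_* \circ (\widetilde{h}_\beta)_* \;=\; \mathcal{B}_{(c',c'')}(\gamma)\,\mathcal{B}_{(c,c')}(\beta), $$
which is the claimed cocycle formula. The remaining assertions are immediate corollaries: since $\widetilde{h}_{\id_c} = \id$, one has $\mathcal{B}_{(c,c)}(\id_c) = \id$, and specialising the cocycle formula to $\gamma = \beta^{-1}$ yields $\mathcal{B}_{(c',c)}(\beta^{-1}) = \mathcal{B}_{(c,c')}(\beta)^{-1}$; restricting further to $c=c'=c''$ exhibits $\mathcal{B}_{(c,c)}$ as an anti-representation of $B_c$.

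The only genuine point requiring care is the bookkeeping around the braid composition convention and, relatedly, the canonical identification of the boundary fibers $P, P', P''$ (needed so that ``fixing the fiber pointwise'' makes sense simultaneously for the two factors $\widetilde{h}_\beta$ and $\widetilde{h}_\gamma$ and for their composite). Once these conventions are pinned down — and they are already fixed by the discussion preceding the proposition — the argument is purely a uniqueness-of-lifts-plus-functoriality routine, so I would expect it to be short.
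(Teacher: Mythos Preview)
Your proposal is correct and follows essentially the same approach as the paper: the paper's (sketched) proof also fixes a lift of the basepoint, observes that the unique lift of $h_{\beta\gamma}=h_\gamma\circ h_\beta$ coincides with $\widetilde{h}_\gamma\circ\widetilde{h}_\beta$, and then deduces the remaining statements as immediate consequences. Your write-up simply makes the uniqueness-of-lifts and functoriality steps more explicit than the paper does.
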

\begin{proof}
Fix an arbitrary lift of $z$ to $\widehat{D}_c$. Since the lift of $h_{\alpha \beta}$ coincides with the lift of $h_{\beta}\circ h_{\alpha}$, the first assertion follows. The second and third statements are immediate consequences of the first.
\end{proof}

Note that the homology $\Lambda_\mu$-module $H_1(\widehat{D}_c,P)$ is free of rank $n$: it is easily shown that lifts $\widetilde{x}_1,\ldots,\widetilde{x}_n$ of the $x_1,\ldots,x_n$ provide a $\Lambda_\mu$-basis~\cite[Lemma 2.2]{ConwayTwistedBurau}. With respect to this basis, the transpose of the matrix for the unreduced colored Gassner representation of the generator~$\sigma_i$ (viewed as a $(c,c')$-braid) can be found in~\cite[Example 3.5]{ConwayTwistedBurau}.

Next following~\cite{Birman} and~\cite[Section 3 (c)]{ConwayTwistedBurau}, we deal with the reduced colored Gassner matrices. Instead of working with the free generators~$x_1,x_2\dots,x_n$ of~$\pi_1(D_c),$ one can consider the elements~$g_1, g_2, \dots,g_n,$ defined by~$g_i:=x_1 x_2 \cdots x_i$. For $i=1,\ldots, n$, let~$\widetilde{g}_i$ be the lift of~$g_i$ to~$\widehat{D}_c$ starting at a fixed lift of $z$. One obtains the splitting 
$$H_1(\widehat{D}_c,P) = \bigoplus_{i=1}^{n-1} \Lambda_\mu \widetilde{g}_i \oplus \Lambda_\mu \widetilde{g}_n.$$
 As~$g_n$ is always fixed by the action of the braid group, its lift~$\widetilde{g}_n$ is fixed by the lift~$\widetilde{h}_\beta$ of a homeomorphism $h_\beta$ representing a colored braid $\beta$. 

\begin{definition}
\label{def:ReducedColoredGassnerAlgebraic}
The \emph{reduced colored Gassner matrix} of a $(c,c')$-braid~$\beta$ is the restriction~$\overline{\mathcal{B}}_{(c,c')}(\beta)$ of the unreduced colored Gassner map to the free~$\Lambda_\mu$-module of rank $(n-1)$ generated by $\widetilde{g}_1,\dots,\widetilde{g}_{n-1}$.
\end{definition} 

As an immediate consequence of Definition~\ref{def:ReducedColoredGassnerAlgebraic}, observe that the reduced colored Gassner matrices satisfy the relations described in Proposition~\ref{prop:GassnerCocycle}. Furthermore, using~$\mathcal{B}_{(c,c')}(\beta)$ to denote the matrix of the unreduced colored Gassner representation of a braid~$\beta$ with respect to the basis $\widetilde{g}_1,\ldots,\widetilde{g}_n$, it follows that 
\begin{equation}
\label{eq:ReducedGassnerFox}
\mathcal{B}_{(c,c')}(\beta)=
\begin{pmatrix} \overline{\mathcal{B}}_{(c,c')}(\beta) & 0 \\ v & 1 \end{pmatrix} 
\end{equation}
for some length $(n-1)$ row vector~$v$. In particular, as explained in~\cite[Example 3.10]{ConwayTwistedBurau}, the reduced colored Gassner matrix of the generator~$\sigma_i$ (viewed as a $(c,c')$-braid) is given by
\begin{align}
\label{eq:MatricesReduced}
 \overline{\mathcal{B}}_{(c,c')}(\sigma_i)
 &=I_{i-2} \oplus 
\begin{pmatrix}
1 & t_{c'_{i+1}} & 0 \\
0 & -t_{c'_{i+1}}  & 0 \\
0 & 1 & 1
\end{pmatrix}
 \oplus I_{n-i-2}
 \end{align}
for~$1<i<n-1$, and for~$\sigma_1$ and~$\sigma_{n-1}$ by
\begin{align*}
 \overline{\mathcal{B}}_{(c,c')}(\sigma_1)
 &= \begin{pmatrix}  -t_{c'_2}  &0 \\
 1 & 1 \end{pmatrix}
 \oplus I_{n-3}, \\
 \overline{\mathcal{B}}_{(c,c')}(\sigma_{n-1})
 &=I_{n-3} \oplus 
\begin{pmatrix}
1 & t_{c'_n}  \\
0 & -t_{c'_n}   \\
\end{pmatrix}.
 \end{align*}
We conclude this section by emphasizing once more that the description of the reduced colored Gassner \emph{matrices} given here differs from the ``reduced colored Gassner \emph{representation}" of~\cite{KirkLivingstonWang, CimasoniTuraev, CimasoniConwayGG}. The relation between these constructions will be clarified in Section~\ref{sec:Homology}.

\section{The multivariable potential function}
\label{sec:Thm}
In this section, we prove Theorem~\ref{thm:Main} by giving a construction of the multivariable potential function which involves the reduced colored Gassner matrices. As we mentioned in the introduction, the proof uses a blend of Jiang's axiomatic characterization of $\nabla_L$~\cite{Jiang}, the homological interpretation of the reduced colored Gassner matrices and ideas of Kassel-Turaev~\cite[Section 3.4]{TuraevKassel}. 

The proof decomposes into three steps: first, given a link $L$, we define a rational function $f_L$, secondly we show that $f_L$ is a link invariant (see Proposition~\ref{prop:Invariance}) and thirdly we show that $f_L$ coincides with the multivariable potential function $\nabla_L$, proving~Theorem~\ref{thm:Main}. Subsection~\ref{sub:Invariantf} deals with the first two steps while Subsection~\ref{sub:Proof} is concerned with the third. Finally, note that an alternative proof of Theorem~\ref{thm:Main} is presented in Appendix~\ref{Appendix}.

\subsection{The invariant $f$}
 \label{sub:Invariantf}

Any $(c,c)$-braid $\beta$ can be decomposed into a product of generators $\prod_{j=1}^{m} \sigma_{i_j}^{\varepsilon_{j}}$, where each $\sigma_{i_j}$ denotes the $i_j$-th generator of the braid group (viewed as an appropriately colored braid) and each $\varepsilon_j$ is equal to $\pm 1$. For each $j$, use~$b_j$ to denote the color of the over-crossing strand in the generator $\sigma_{i_j}^{\varepsilon_{j}}$ and consider the Laurent polynomial 
$$ \langle \beta \rangle:=\prod_{j=1}^m t_{b_j}^{-\varepsilon_j}.$$ 
Finally, define $g \colon \Lambda_\mu \to \Lambda_\mu$ by extending $\Z$-linearly the group endomorphism of $\Z^\mu=\langle t_1,\ldots,t_\mu \rangle$ which sends $t_i$ to $t_i^2$.  
\begin{definition} 
\label{def:Invariantf}
For any $(c,c)$-braid $\beta$ with~$n$ strands, set
$$ f(\beta):=  (-1)^{n+1} \cdot \frac{1}{t_{c_1} \cdots  t_{c_n} - t^{-1}_{c_1} \cdots  t^{-1}_{c_n}} \cdot \langle \beta \rangle \cdot g(\det(\Grcc(\beta) - I_{n-1})).$$
\end{definition}

In order to define $f$ on a colored link $L$, proceed as follows: use the colored Alexander theorem in order to write $L$ as the closure of a $(c,c)$-braid $\beta$ and set 
$$f_L:=f(\beta).$$
Observe that $f$ is only well-defined provided it takes the same value on colored braids whose closures are isotopic. The proof of this result will be given in Proposition~\ref{prop:Invariance}. However, accepting this fact for the time being, we provide some sample computations.

\begin{example}
\label{ex:ExampleHopf} 
Set $c=(1,2)$ and view the $2$-colored positive Hopf link $H$ as the closure of the $2$-stranded $(c,c)$-braid $\sigma^{-2}_1$. Since $\langle \sigma^{-2}_1 \rangle$ is given by $t_1 t_2$ and  $\mathcal{B}_{(c,c)}(\sigma^{-2}_1)=t^{-1}_1 t^{-1}_2$ (here we used~(\ref{eq:MatricesReduced}) and Proposition~\ref{prop:GassnerCocycle}), we deduce from Definition~\ref{def:Invariantf} that
$$ f_H = (-1)^3 \frac{t_1 t_2 }{t_1 t_2 - t^{-1}_1 t^{-1}_2} (t^{-2}_1 t^{-2}_2 - 1) = 1. $$
\end{example}

Next, we give a slightly more involved example:

\begin{example}
\label{ex:InvariantChain}
Set $c=(1,2,3)$ and view the link $L$ depicted in Figure~\ref{fig:Chain} as the closure of the $3$-stranded $(c,c)$-braid~$\sigma^{-2}_1 \sigma^{-2}_2$. Using~(\ref{eq:MatricesReduced}) and Proposition~\ref{prop:GassnerCocycle}, we can compute $\mathcal{B}_{(c,c)}(\sigma^{-2}_1 \sigma^{-2}_2)$. After subtracting the identity, taking the determinant and applying $g$, we obtain $1-t^{-2}_2 - t^{-2}_1 t^{-2}_2 t^{-2}_3 + t^{-2}_1 t^{-4}_1 t^{-2}_1$. Moreover, since $\langle \sigma^{-2}_1 \sigma^{-2}_2 \rangle$ is equal to $t_1 t^2_2 t_3$, Definition~\ref{def:Invariantf} implies that
$$ f_L = (-1)^4 \frac{t_1 t^2_2 t_3}{t_1 t_2 t_3 - t^{-1}_1 t^{-1}_2 t^{-1}_3} (1 - t^{-2}_2 - t^{-2}_1 t^{-2}_2 t^{-2}_3 + t^{-2}_1 t^{-4}_1 t^{-2}_1) = t_2 - t^{-1}_2. $$
\end{example}

\begin{figure}[!htb]
\includegraphics[scale=0.5]{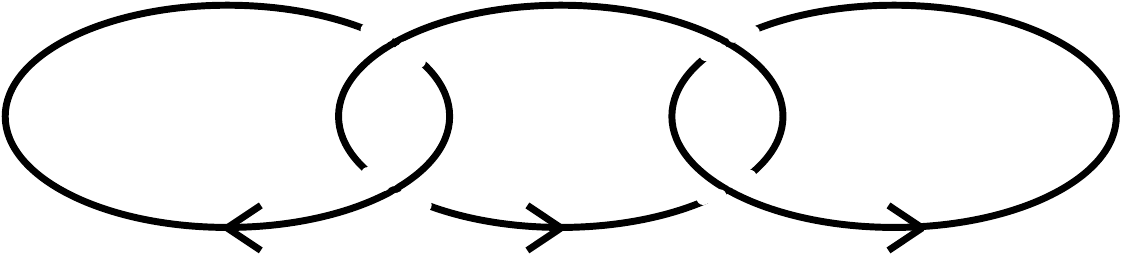}
\caption{The link $L$ used in Example~\ref{ex:InvariantChain}.}
\label{fig:Chain}
\end{figure}

In order to prove the invariance of $f$, we shall show that it is invariant under the colored Markov moves described in Proposition~\ref{prop:MarkovColored}. To do so, we start with a preliminary lemma. Given a $(c,c)$-braid $\beta$, recall from~(\ref{eq:ReducedGassnerFox}) that in the basis $\widetilde{g}_1,\ldots,\widetilde{g}_n$ of $H_1(\widehat{D}_c,P)$, the unreduced colored Gassner matrix of $\beta$ can be written as
$$ \mathcal{B}_{(c,c)}(\beta)=\begin{pmatrix} 
\Grcc(\beta) & 0 \\
v & 1 \end{pmatrix}, $$
where $v$ is a row vector. The next lemma shows that this vector can be expressed in terms of the reduced colored Gassner matrix.

\begin{lemma}
\label{lem:NoInfo}
Given a $(c,c)$-braid $\beta$ with $n$ strands, use $r_i$ to denote the i$^{\text{th}}$ row of the matrix $\Grcc(\beta) - I_{n-1}$. The following equality holds:
\begin{equation}
\label{eq:NoInfo}
\sum_{i=1}^{n-1} (t_{c_1}  \cdots t_{c_i} - 1)r_i  = -(t_{c_1} \ldots t_{c_n} - 1)v.
\end{equation}
\end{lemma}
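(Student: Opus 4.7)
The plan is to derive the identity~(\ref{eq:NoInfo}) from the equivariance of the natural connecting homomorphism $\partial \colon H_1(\widehat{D}_c,P) \to H_0(P)$ in the long exact sequence of the pair $(\widehat{D}_c,P)$, after which the lemma will reduce to inspecting the first $n-1$ columns of the resulting row-vector identity in the block decomposition~(\ref{eq:ReducedGassnerFox}).

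First, I would fix a lift $\widetilde{z} \in P$ of the basepoint $z$. Since $P$ is the fiber of the regular $\mathbb{Z}^\mu$-cover $\widehat{D}_c \to D_c$, the group $H_0(P)$ is a free $\Lambda_\mu$-module of rank one, generated by $\widetilde{z}$. By the definition of the unreduced colored Gassner representation recalled in Subsection~\ref{sub:ColoredGassner}, the lift $\widetilde{h}_\beta$ fixes $P$ pointwise and therefore induces the identity on $H_0(P)$. Consequently $\partial \circ \mathcal{B}_{(c,c)}(\beta) = \partial$.

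Next I would compute $\partial$ on the basis $\widetilde{g}_1,\ldots,\widetilde{g}_n$. Since $\psi_c(g_i)=t_{c_1}\cdots t_{c_i}$, the lift of $g_i$ starting at $\widetilde{z}$ ends at $(t_{c_1}\cdots t_{c_i})\cdot \widetilde{z}$, so $\partial(\widetilde{g}_i)=(t_{c_1}\cdots t_{c_i}-1)\widetilde{z}$. Writing $a_i := t_{c_1}\cdots t_{c_i}-1$, the equivariance from the previous paragraph translates into the row-vector identity
$$(a_1,\ldots,a_n)\,\mathcal{B}_{(c,c)}(\beta)=(a_1,\ldots,a_n),$$
equivalently $(a_1,\ldots,a_n)\,(\mathcal{B}_{(c,c)}(\beta)-I_n)=0$. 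Plugging in the block form~(\ref{eq:ReducedGassnerFox}) and reading off the first $n-1$ entries of the resulting row vector yields $(a_1,\ldots,a_{n-1})(\overline{\mathcal{B}}_{(c,c)}(\beta)-I_{n-1})+a_n v=0$, which is exactly the claim~(\ref{eq:NoInfo}) once the left-hand side is rewritten as $\sum_{i=1}^{n-1} a_i r_i$.

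The only step with genuine geometric content is the computation $\partial(\widetilde{g}_i)=a_i \widetilde{z}$, which hinges on the endpoint of the chosen lift of $g_i$ and the deck action on $P$; everything else is formal linear algebra built on top of the block form already established in~(\ref{eq:ReducedGassnerFox}). I do not foresee a serious obstacle.
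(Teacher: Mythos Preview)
Your proposal is correct and follows essentially the same approach as the paper's proof: both use the commutativity $\partial \circ \mathcal{B}_{(c,c)}(\beta)=\partial$ coming from the long exact sequence of the pair $(\widehat{D}_c,P)$ together with the fact that $\widetilde{h}_\beta$ fixes $P$ pointwise, then compute $\partial(\widetilde{g}_i)=(t_{c_1}\cdots t_{c_i}-1)\widetilde{z}$ and read off the identity from the block form~(\ref{eq:ReducedGassnerFox}). Your write-up is slightly more explicit about the linear-algebraic bookkeeping, but there is no substantive difference.
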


\begin{proof}
Fix a basepoint $z$ in $\partial D_c$ and let $P$ be its fiber in the cover $\widehat{D}_c \to D_c$. Let $h_\beta$ be a self-homeomorphism of $D_c$ representing $\beta$, fix an arbitrary lift of $z$ to $\widehat{D}_c$ and let $\widetilde{h}_{\beta} \colon \widehat{D}_c \rightarrow \widehat{D}_c$ be the lift of $h_\beta$ fixing $P$ pointwise. Using $\partial$ to denote the connecting homomorphism in the long exact sequence of the pair $(\widehat{D}_c,P)$, the following diagram commutes by naturality of the long exact sequence in homology:
$$ \xymatrix@R1cm{ H_1(\widehat{D}_c, P) \ar[d]_{\mathcal{B}_{(c,c)}(\beta)} \ar[r]^{\partial} & H_0(P) \ar[d]^{(\widetilde{h}_{\beta})_*} \\
H_1(\widehat{D}_c, P) \ar[r]^{\partial}  &  H_0(P). }$$
Since $\widetilde{h}_{\beta}$ fixes $P$ pointwise, it induces the identity on degree zero homology. With respect to the basis $\widetilde{g}_1,\ldots,\widetilde{g}_n$ of $H_1(\widehat{D}_c, P)$ the connecting homomorphism $\partial$ is represented by the $1 \times n$ matrix $(t_{c_1} - 1, t_{c_1} t_{c_2} - 1, \ldots , t_{c_1} t_{c_2} \ldots t_{c_n} - 1)$. Writing out explicitly the equation $ \partial \circ  \mathcal{B}_{(c,c)}(\beta) = \partial$ yields~(\ref{eq:NoInfo}), concluding the proof of the lemma.
\end{proof}

Given a sequence $c=(c_1,\ldots,c_n)$ of integers in $\lbrace 1,\ldots ,\mu \rbrace$, recall that $i_{c_n} \colon B_c \hookrightarrow B_{(c_1, \ldots, c_n, c_n)}$ denotes the natural inclusion which sends $\alpha$ to the disjoint union of $\alpha $ with a trivial strand of color $c_n$. We can now prove the main result of this subsection, namely the invariance of $f$ under the colored Markov moves.

\begin{proposition}
\label{prop:Invariance}
The rational function $f$ is invariant under both colored Markov moves. More precisely, we have the following equalities:
\begin{enumerate}
\item $f(\alpha \beta) = f(\beta \alpha)$ for all $(c,c')$-braids $\alpha$ and all $(c',c)$-braids $\beta$.
\item $f(\alpha) = f( \sigma_{n}^{\varepsilon}i_{c_n}(\alpha))$ for all $n$-stranded $(c,c)$-braids $\alpha$, where the $n$-th generator~$\sigma_n$ of~$B_{n+1}$ is viewed as a $((c_1, \ldots ,c_n, c_n),(c_1, \ldots ,c_n, c_n))$-braid and $\varepsilon$ is equal to $\pm 1$.
\end{enumerate}
\end{proposition}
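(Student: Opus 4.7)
The plan is to verify that $f$ is invariant under each of the two colored Markov moves of Proposition~\ref{prop:MarkovColored}, treating the three factors of Definition~\ref{def:Invariantf} (the sign and denominator, the monomial $\langle\cdot\rangle$, and $g$ applied to the determinant) separately.

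For part~(1), the monomial $\langle\alpha\beta\rangle$ equals $\langle\alpha\rangle\langle\beta\rangle=\langle\beta\alpha\rangle$ since the color of an over-crossing strand depends only on the strand itself (not on the ambient composition) and $\Lambda_\mu$ is commutative; the sequences $c$ and $c'$ are related by the permutation induced by $\alpha$, so the denominators $t_{c_1}\cdots t_{c_n}-t_{c_1}^{-1}\cdots t_{c_n}^{-1}$ and $t_{c_1'}\cdots t_{c_n'}-t_{c_1'}^{-1}\cdots t_{c_n'}^{-1}$ agree; and the number of strands is unchanged. Setting $X:=\overline{\mathcal{B}}_{(c,c')}(\alpha)$ and $Y:=\overline{\mathcal{B}}_{(c',c)}(\beta)$, Proposition~\ref{prop:GassnerCocycle} gives $\overline{\mathcal{B}}_{(c,c)}(\alpha\beta)=YX$ and $\overline{\mathcal{B}}_{(c',c')}(\beta\alpha)=XY$, so the remaining equality reduces to the classical identity $\det(YX-I)=\det(XY-I)$ (a special case of Sylvester's determinant identity).

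For part~(2), write $\tilde c:=(c_1,\ldots,c_n,c_n)$. The first step is to compute $\overline{\mathcal{B}}_{(\tilde c,\tilde c)}(i_{c_n}(\alpha))$: since the added strand is trivial, $i_{c_n}(\alpha)$ fixes $x_{n+1}$ and acts as $\alpha$ on $x_1,\ldots,x_n$, so it also fixes $\widetilde{g}_n=\widetilde{x_1\cdots x_n}$. This forces the block form
$$\overline{\mathcal{B}}_{(\tilde c,\tilde c)}(i_{c_n}(\alpha))=\begin{pmatrix}\overline{\mathcal{B}}_{(c,c)}(\alpha) & 0 \\ v & 1\end{pmatrix},$$
in which $v$ is precisely the row vector of Lemma~\ref{lem:NoInfo} associated to $\alpha$. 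Right-multiplying by $\overline{\mathcal{B}}_{(\tilde c,\tilde c)}(\sigma_n^\varepsilon)=I_{n-2}\oplus M_\varepsilon$ (read off from equation~(\ref{eq:MatricesReduced}) and Proposition~\ref{prop:GassnerCocycle}), subtracting $I_n$, and performing a single column operation on the last two columns reduce $\det(\overline{\mathcal{B}}_{(\tilde c,\tilde c)}(\sigma_n^\varepsilon i_{c_n}(\alpha))-I_n)$ to a $2\times 2$ cofactor expansion with two nonzero terms: one is a scalar multiple of $\det(\overline{\mathcal{B}}_{(c,c)}(\alpha)-I_{n-1})$, the other is a scalar multiple of the determinant $M$ of the matrix obtained from $\overline{\mathcal{B}}_{(c,c)}(\alpha)-I_{n-1}$ by replacing its last row by $v$.

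The key step is then to re-express $M$ via Lemma~\ref{lem:NoInfo}: the lemma gives an explicit linear relation among the rows of $\overline{\mathcal{B}}_{(c,c)}(\alpha)-I_{n-1}$ and $v$, so multilinearity and alternation of the determinant (applied in the field of fractions of $\Lambda_\mu$) deliver the identity $(t_{c_1}\cdots t_{c_n}-1)\,M=-(t_{c_1}\cdots t_{c_{n-1}}-1)\det(\overline{\mathcal{B}}_{(c,c)}(\alpha)-I_{n-1})$. Substituting this back, applying $g$, and carefully updating the three factors of $f$—the change of sign $(-1)^{n+1}\mapsto(-1)^{n+2}$, the monomial $t_{c_n}^{-\varepsilon}$ contributed by $\sigma_n^\varepsilon$ to $\langle\cdot\rangle$, and the new denominator bearing an extra factor of $t_{c_n}$—should collapse to $f(\sigma_n^\varepsilon i_{c_n}(\alpha))=f(\alpha)$ for each sign of $\varepsilon$. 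The main obstacle is this sustained bookkeeping, in particular identifying the vector $v$ appearing in the block form with that of Lemma~\ref{lem:NoInfo} and checking that the final algebraic cancellations really produce the expected identity for both values of $\varepsilon$.
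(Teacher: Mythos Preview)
Your proposal is correct and follows essentially the same route as the paper. For part~(1) both arguments reduce to $\det(YX-I)=\det(XY-I)$; the paper obtains this by writing $\Grcc(\alpha\beta)-I=\Grccp(\alpha)^{-1}(\Grcpcp(\beta\alpha)-I)\Grccp(\alpha)$, which is just your Sylvester identity in disguise. For part~(2) the paper performs exactly the same column operation you describe and then uses Lemma~\ref{lem:NoInfo} as a \emph{row operation} (multiplying the last row by $t_{c_1}\cdots t_{c_n}-1$ and adding the weighted sum of the others) before expanding along the last row, whereas you expand along the last column first and then invoke Lemma~\ref{lem:NoInfo} via multilinearity to evaluate the auxiliary determinant $M$; these are the same computation in a different order and yield the same factor $\dfrac{1-t_{c_1}\cdots t_{c_{n-1}}t_{c_n}^2}{t_{c_1}\cdots t_{c_n}-1}$. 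The paper, like you, only writes out $\varepsilon=+1$ and leaves $\varepsilon=-1$ to the reader.
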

\begin{proof}

To show the first statement, given a $(c,c')$-braid $\alpha$ and a $(c',c)$-braid $\beta$, our goal is to show that $f(\alpha \beta)$ and $f(\beta \alpha)$ coincide. Since $\langle \alpha \beta \rangle= \langle \beta \alpha \rangle$, this clearly reduces to showing 
\begin{equation}
\label{eq:Goal0}
\det(\Grcc(\alpha \beta) - I_{n-1}) = \det(\Grcpcp(\beta \alpha) - I_{n-1}).
\end{equation}
Using the equality $\alpha \beta=\alpha \beta \alpha \alpha^{-1}$ and Proposition~\ref{prop:GassnerCocycle}, we deduce that $\Grcc(\alpha \beta) - I_{n-1}$ is equal to $
\overline{\mathcal{B}}_{(c,c')}(\alpha)^{-1}(\Grcpcp(\beta \alpha) - I_{n-1})\Grccp(\alpha) $. This immediately implies~(\ref{eq:Goal0}).

To prove the second statement, fix a $(c,c)$-braid $\alpha$, set $\varepsilon = +1$ (the case $\varepsilon = -1$ is treated identically), and write $c'$ for $(c_1, \ldots ,c_n, c_n)$. Our goal is to show that $f(\alpha) = f( \sigma_{n}i_{c_n}(\alpha))$. Using Definition~\ref{def:Invariantf} and the equality $\big \langle \sigma_{n}i_{c_n}(\alpha) \big \rangle=t_{c_n}^{-1} \big \langle \alpha \big \rangle$, it is enough to show that
\begin{equation}
\label{eq:Goal}
\frac{g(\det(\Grcc(\alpha)-I_{n-1}))}{t_{c_1} \cdots  t_{c_n} - t^{-1}_{c_1} \cdots  t^{-1}_{c_n}} 
= \frac{-t_{c_n}^{-1} \cdot g\left( \Grcpcp( \sigma_{n}i_{c_n}(\alpha) ) - I_n \right)}{t_{c_1} \cdots t_{c_{n-1}} t_{c_n}^2 - t^{-1}_{c_1} \cdots t^{-1}_{c_{n-1}} t^{-2}_{c_n}}.
\end{equation}
Our aim is now to compare the determinants of $\Grcpcp(\sigma_n i_{c_n}(\alpha)) - I_{n}$ and of $\Grcc(\alpha) - I_{n-1}$. To do so, we start by investigating $\Grcpcp(i_{c_n}(\alpha))$.  Since $h_{i_{c_n}(\alpha)}(\widetilde{g}_i)=h_\alpha(\widetilde{g}_i)$ for $i=1,\ldots,n$, we deduce that $\Grcpcp(i_{c_n}(\alpha))$ is given by $\left(\begin{smallmatrix} \Grcc(\alpha) & 0 \\ v & 1 \end{smallmatrix}\right)$, where $v$ is a length $(n-1)$ row vector. The goal is now to express the determinant of $\Grcpcp( \sigma_{n} i_{c_n}(\alpha)) - I_{n}$ in terms of the determinant of $\Grcc(\alpha) - I_{n-1}$. To that end, we write $\overline{\mathcal{B}}_{(c,c)}(\alpha)$ as $\left(\begin{smallmatrix} B & b_1\\ b_2 & a  \end{smallmatrix}\right)$ and $\Grcpcp(i_{c_n}(\alpha))$ as
\begin{equation}
 \label{matriceexplicitee1}
\Grcpcp(i_{c_n}(\alpha)) = \begin{pmatrix} 
B & b_1 & 0\\
b_2 & a & 0\\
v_1 & v_2 & 1
\end{pmatrix},
\end{equation}
where $B$ is a square matrix of size $n-2$, $b_1$ is a $(n-2) \times 1$ matrix, $b_2$ and $v_1$ are $1 \times (n-2)$ matrices, and $a$ and $v_2$ belong to~$\Lambda_{\mu}$. Using successively Proposition~\ref{prop:GassnerCocycle} and~(\ref{eq:MatricesReduced}), we deduce that
$$  \Grcpcp( \sigma_{n}i_{c_n}(\alpha) ) - I_n =
\begin{pmatrix} 
B-I_{n-2} & b_1 & t_{c_n} b_1\\
b_2 & a -1 & t_{c_n} a\\
v_1 & v_2 & t_{c_n}(v_2 - 1) - 1
\end{pmatrix}. $$
Our plan is to use Lemma~\ref{lem:NoInfo} and a sequence of elementary operations in order to remove the vectors $v_1$ and $v_2$. Firstly, we subtract the second-to-last column multiplied by $t_{c_n}$ to the last column. Secondly, using $A_i$ to denote the rows of the resulting matrix, we multiply the last row of this matrix by $(t_{c_1} \cdots t_{c_n} -~1)$ and add to it $\sum_{i=1}^{n-1}(t_{c_1}\cdots t_{c_i}-1)A_i$. Using Lemma~\ref{lem:NoInfo}, the result of these two operations is
$$ X:=\begin{pmatrix} 
B-I_{n-2} & b_1 & 0\\
b_2 & a-1 & t_{c_n}\\
0 & 0 & e
\end{pmatrix},$$
where $e$ stands for  $(1-t_{c_1} \cdots t_{c_{n-1}} t_{c_n}^2)$. Notice that the second operation we performed yields a factor of $(t_{c_1} \cdots t_{c_n} -1)^{-1}$ to the determinant; more precisely, $\det(X)=(t_{c_1} \cdots t_{c_n} -1) \det( \Grcpcp( \sigma_{n}i_{c_n}(\alpha) ) - I_n)$. Combining these observations and computing $\det(X)$ by expanding along the last row, we obtain
$$ \det(\Grcpcp(\sigma_{n}i_{c_n}(\alpha))-I_{n}) = \frac{1-t_{c_1} \cdots t_{c_{n-1}} t_{c_n}^2}{t_{c_1} \cdots t_{c_n} - 1} \det(\Grcc(\alpha)-I_{n-1}).$$
Plugging this equality into the right hand side of~(\ref{eq:Goal}), the verification of the second Markov move reduces to checking the following equality:
$$ \frac{g(\det(\Grcc(\alpha)-I_{n-1}))}{t_{c_1} \cdots  t_{c_n} - t^{-1}_{c_1} \cdots  t^{-1}_{c_n}}=
\frac{-t_{c_n}^{-1}  g(\det(\Grcc(\alpha)-I_{n-1})) }{t_{c_1} \cdots t_{c_{n-1}} t_{c_n}^2 - t^{-1}_{c_1} \cdots t^{-1}_{c_{n-1}} t^{-2}_{c_n}} g\left( \frac{1-t_{c_1} \cdots t_{c_{n-1}} t_{c_n}^2}{t_{c_1} \cdots t_{c_n} - 1} \right). $$
Simplifying the $g(\det(\Grcc(\alpha)-I_{n-1})$, this latter equation can easily be verified to hold.
\end{proof}

\subsection{Proof of Theorem~\ref{thm:Main}}
\label{sub:Proof}

By Proposition~\ref{prop:Invariance}, we know that $f_L$ is a link invariant. In order to prove Theorem~\ref{thm:Main} (which states that $f_L$ is equal to the multivariable potential function $\nabla_L$) we shall use Jiang's characterization theorem~\cite{Jiang} which states that $\nabla_L$ is uniquely determined by the following set of five local relations: 
\begin{itemize}
\item [(R1)] $\nabla_H=1$, where $H$ is the positive Hopf link.
\item [(R2)] $\nabla_{L \sqcup U}=0$, where $L \sqcup U$ denotes the disjoint union of $L$ and a trivial knot $U$.
\item [(R3)] $\nabla_{L'}=(t_i-t_i^{-1})\nabla_{L_0}$, where $L'$ is obtained from $L_0$ by the local operation given by
\begin{figure}[!htb]
\centering
\includegraphics[scale=0.5]{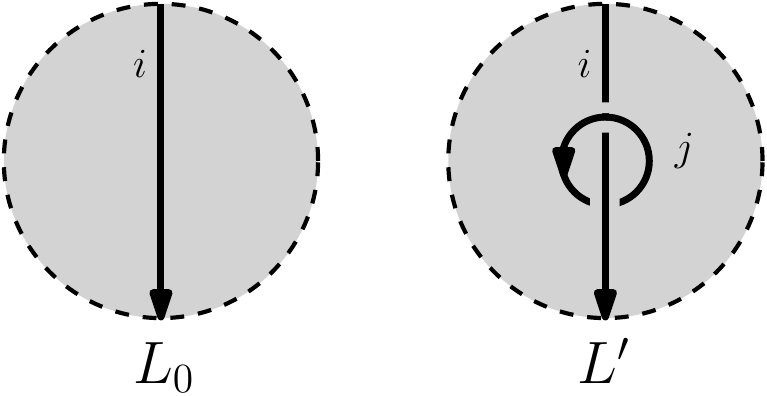}
\end{figure}
\item [(R4)] $\nabla_{L_{++}}+\nabla_{L_{--}}=(t_it_j-t_i^{-1}t_j^{-1})\nabla_{L_0}$, where $L_{++}, L_{--}$ and $L_0$ differ by the local relation
\begin{figure}[!htb]
\centering
\includegraphics[scale=0.5]{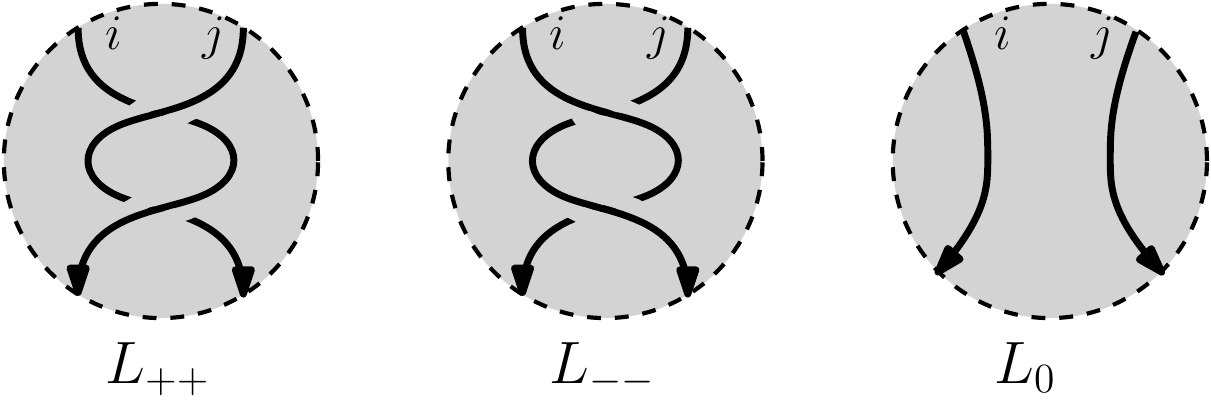}
\end{figure}
\item [(R5)] $
(t_i^{-1}t_j^{-1}-t_it_j)(\nabla_{L(1)}+\nabla_{L(2)})
+(t_jt_k-t_j^{-1}t_k^{-1})(\nabla_{L(3)}+\nabla_{L(4)})$ \medbreak
$\qquad \qquad  \qquad \qquad \qquad \qquad \qquad \qquad \qquad+
(t_it_k^{-1}-t_i^{-1}t_k)(\nabla_{L(5)}+\nabla_{L(6)})=0,$ \medbreak
where $L(1),L(2),L(3),L(4),L(5)$ and $L(6)$ differ by the local operation

\begin{figure}[!htb]
\centering
\includegraphics[scale=0.5]{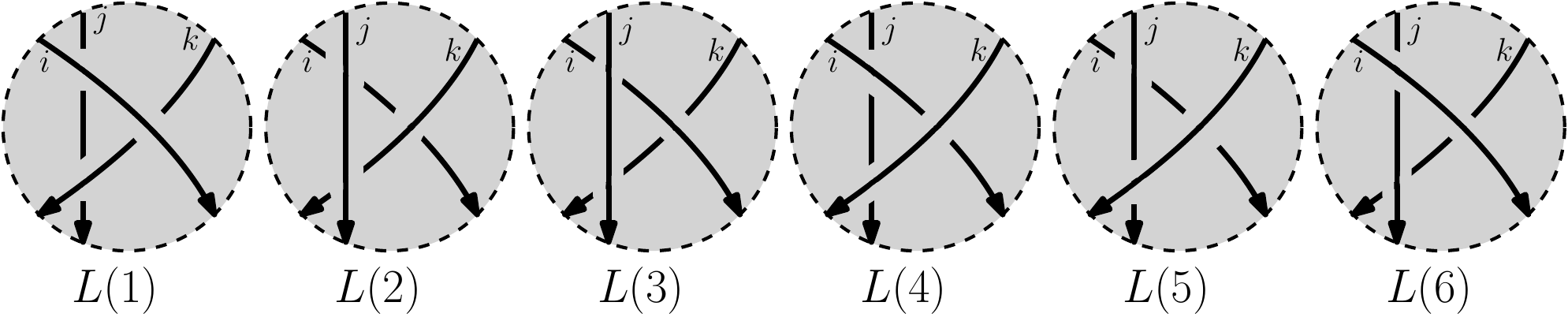}
\end{figure}
\end{itemize}

Since each of Jiang's axioms is written in terms of local relations, we wish to find braids whose closures realize these relations. Even though the end result is independent of such choices (thanks to Proposition \ref{prop:Invariance}), we will check the axioms by placing the braids which realize the local moves on the top of the braid diagrams. The following lemma justifies the use of this simplification.

\begin{lemma}
\label{lem:ManipLocal}
Let $L$ be a colored link which coincides with a colored braid $\alpha$ in a small cylinder. Then there exist a colored braid $\beta_r$ (resp. $\beta_l)$ whose closure is isotopic to $L$, and in which $\alpha$ is located at the top right (resp. left) of the braid.
\end{lemma}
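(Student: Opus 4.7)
The plan is to combine Remark~\ref{rem:proofAlexander} (the refined colored Alexander theorem) with two applications of the first colored Markov move from Proposition~\ref{prop:MarkovColored}. First, I would invoke Remark~\ref{rem:proofAlexander} to assume $L = \widehat{\gamma}$ for a colored braid $\gamma$ containing $\alpha$ inside a small cylinder $C \subset D^2 \times [0,1]$. Let $[a,b]$ denote the vertical range of $C$; slicing $\gamma$ at heights $a$ and $b$ produces a decomposition $\gamma = \gamma_1 \cdot \delta \cdot \gamma_2$, where $\delta$, the horizontal slab between these heights, is precisely $\alpha$ juxtaposed with trivial vertical strands on both sides.

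Second, I would apply Markov move (1) to the decomposition $\gamma_1 \cdot (\delta \gamma_2)$: this replaces it with $(\delta \gamma_2) \cdot \gamma_1$ without changing the closure, thereby moving $\delta$, and with it $\alpha$, to the top of the braid. Third, to reposition $\alpha$ horizontally at the rightmost strands, I observe that in $\delta$ the $k$ strands of $\alpha$ occupy some consecutive range $\{j+1,\ldots,j+k\}$, flanked by trivial strands. I would construct a colored braid $\tau$ that permutes the strands so as to send this range to $\{n-k+1,\ldots,n\}$, order-preserving on the complement, with top and bottom colorings compatible with those of $\delta$. Using $\tau$, I can then factor $\delta = \tau \cdot (\mathrm{id}_{n-k} \sqcup \alpha) \cdot \tau^{-1}$ as colored braids. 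A second application of Markov move (1) pushes the leading $\tau$ through the closure to yield
$$\beta_r := (\mathrm{id}_{n-k} \sqcup \alpha) \cdot (\tau^{-1} \gamma_2 \gamma_1 \tau),$$
whose closure is isotopic to $L$ and in which $\alpha$ sits at the top right. The construction of $\beta_l$ is symmetric, with $\tau$ instead sending $\alpha$'s strands to $\{1,\ldots,k\}$.

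The main obstacle will be the bookkeeping in the third step: one must exhibit $\tau$ as an explicit product of the generators $\sigma_i^{\pm}$ with the correct source and target colorings, and then verify that the factorization $\delta = \tau \cdot (\mathrm{id}_{n-k} \sqcup \alpha) \cdot \tau^{-1}$ really holds as colored braids (not merely as uncolored pictures). This is essentially a picture-drawing exercise, but it requires some care because the generic colorings of the non-$\alpha$ strands in $\delta$ may differ on the two sides of $\alpha$ and because $\alpha$ itself need not be a $(c,c)$-braid; nonetheless, since the surrounding strands are trivial in $\delta$, the desired $\tau$ can be built purely out of permutation-type generators that do not enter the cylinder occupied by $\alpha$.
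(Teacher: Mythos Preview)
Your proposal is correct and follows essentially the same strategy as the paper: invoke Remark~\ref{rem:proofAlexander} to get a braid containing $\alpha$, use the first Markov move (conjugation) to bring $\alpha$ to the top, then reposition $\alpha$ horizontally and conjugate once more.

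The only genuine difference lies in how the horizontal move is carried out. The paper performs it as an isotopy of the \emph{closed link}, sliding $\alpha$ along the closure arcs (this is what Figure~\ref{fig:braidsmodif} depicts), and then conjugates to restore a braid presentation. You instead stay inside the braid group and use the algebraic identity $\delta = \tau \cdot (\id_{n-k} \sqcup \alpha)\cdot \tau^{-1}$, followed by another Markov move. Both are valid; your version is slightly more formal but requires you to actually justify that identity, whereas the paper's picture argument sidesteps it. Your claimed factorization does hold---it is the standard fact that the embeddings $B_k \hookrightarrow B_n$ placing $\alpha$ on different blocks of consecutive strands are conjugate via a block-shift braid---so the ``picture-drawing exercise'' you anticipate is routine. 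One small quibble: your phrase ``generators that do not enter the cylinder occupied by $\alpha$'' is a bit misleading, since $\tau$ necessarily crosses the $\alpha$-strands with the flanking strands; what you mean (and what makes the identity work) is that $\tau$ introduces no crossings \emph{among} the $\alpha$-strands themselves.
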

 \begin{proof}
Let $L$ be a colored link which coincides with a colored braid $\alpha$ in a small cylinder. Remark~\ref{rem:proofAlexander} ensures the existence of a braid whose closure is $L$, containing $\alpha$ in a small cylinder. First, by conjugation, we bring $\alpha$ to the top of the braid. Then, performing the isotopy depicted in the third diagram of Figure~\ref{fig:braidsmodif}, we move $\alpha$ to the top right (resp. left) of the braid. Finally, as illustrated in the rightmost diagram of Figure~\ref{fig:braidsmodif}, we use conjugation one last time to conclude the proof.
 \end{proof}

\begin{figure}[h] 
\includegraphics[scale=0.6]{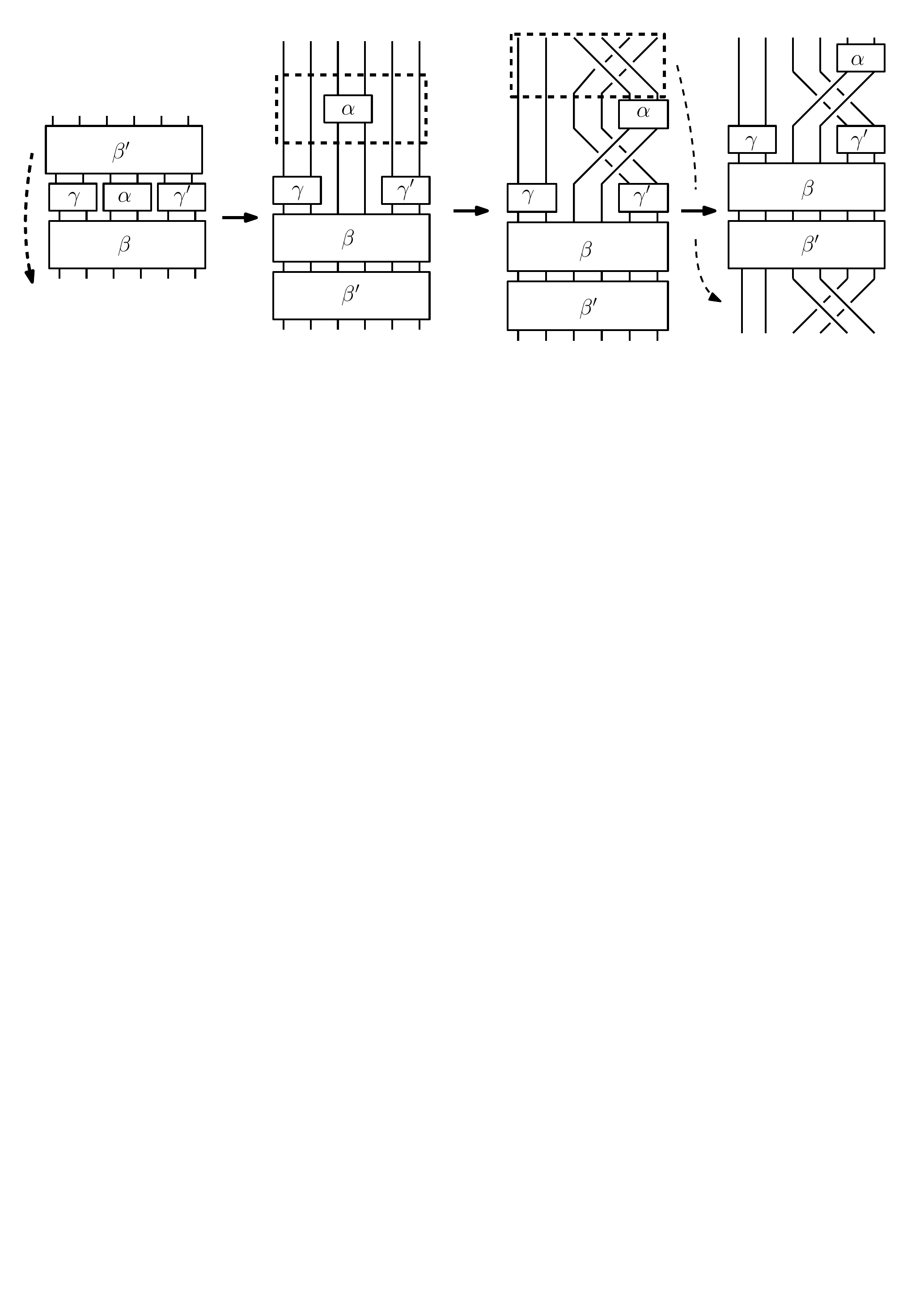}
\caption{Constructing the braid $\beta_r$ by moving $\alpha$ to the upper right.}
\label{fig:braidsmodif}
\end{figure}

We now check that $f_L$ satisfies Jiang's axioms $(R1) \ldots (R5)$. Once the process is completed, we will have concluded the proof of Theorem~\ref{thm:Main}.

 \subsubsection*{Axioms $(R1)$ and $(R2)$}
The fact that $f$ verifies Axiom $(R1)$ was proved in Example \ref{ex:ExampleHopf}. To check that $f$ verifies Axiom $(R2)$, suppose $L$ can be written as the closure of some $n$-stranded $(c,c)$-braid~$\alpha$. Use Lemma~\ref{lem:ManipLocal} to assume that $L \sqcup U$ is obtained as the closure of the $(c',c')$-braid $i_{c_{n+1}}(\alpha)$, where $c'$ is obtained from $c$ by adding an arbitrary additional color $c_{n+1}$. As explained in the proof of Proposition~\ref{prop:Invariance}, the last column of $\overline{\mathcal{B}}_{(c',c')}(i_{c_{n+1}}(\alpha))$ is $(0, \ldots 0, 1)^{T}$. It follows that $\det(\overline{\mathcal{B}}_{(c',c')}(i_{c_{n+1}}(\alpha)) - I_n)$ vanishes and thus so does $f(i_{c_{n+1}}(\alpha))$, as required.

 \subsubsection*{Axiom $(R3)$}
The proof of Axiom $(R3)$ is similar to the proof (given in Proposition~\ref{prop:Invariance}) of the invariance of $f_L$ under the second colored Markov move. Suppose~$L_0$ is obtained as the closure of some $n$-stranded $(c,c)$-braid $\alpha$. We use Lemma~\ref{lem:ManipLocal} to assume that $L'$ is obtained as the closure of $\sigma_n^{-2}i_{c_{n+1}}(\alpha)$; here, $\sigma_n$ is viewed as a $(c',c')$-braid, where $c'$ is obtained from $c$ by adding an arbitrary extra color $c_{n+1}$. The equality we wish to prove is $(t_{c_n} - t_{c_n}^{-1})f(\alpha) = f(\sigma_n^{-2}i_{c_{n+1}}(\alpha))$. Using Definition~\ref{def:Invariantf} and the equality $\langle \sigma_n^{-2}\alpha \rangle=t_{c_n}t_{c_{n+1}} \langle \alpha \rangle$, this reduces to showing the relation
\begin{equation}
\label{eq:Goal2}
\frac{ (t_{c_n} - t_{c_n}^{-1})  g(\det(\Grcc(\alpha)-I_{n-1})) }{t_{c_1} \cdots  t_{c_n} - t^{-1}_{c_1} \cdots  t^{-1}_{c_n}} 
= \frac{-t_{c_n}t_{c_{n+1}} g( \det(\Grcpcp(\sigma_n^{-2}i_{c_{n+1}}(\alpha))-I_{n})) }{t_{c_1} \cdots t_{c_{n+1}} - t^{-1}_{c_1} \cdots t^{-1}_{c_{n+1}}}.
\end{equation}
The aim is now to express the determinant of $\Grcpcp( \sigma_{n}^{-2} i_{c_{n+1}}(\alpha)) - I_{n}$ in terms of the determinant of $\Grcc(\alpha) - I_{n-1}$. As in Proposition~\ref{prop:Invariance}, we write $\overline{\mathcal{B}}_{(c,c)}(\alpha)$ as $\left(\begin{smallmatrix} B & b_1\\ b_2 & a  \end{smallmatrix}\right)$ and $\Grcpcp(i_{c_{n+1}}(\alpha))$ as
\begin{equation}
 \label{matriceexplicitee1}
\Grcpcp(i_{c_{n+1}}(\alpha)) = \begin{pmatrix} 
B & b_1 & 0\\
b_2 & a & 0\\
v_1 & v_2 & 1
\end{pmatrix},
\end{equation}
where $B$ is a square matrix of size $n-2$, $b_1$ is a $(n-2) \times 1$ matrix, $b_2$ and $v_1$ are $1 \times (n-2)$ matrices, and $a$ and $v_2$ belong to~$\Lambda_{\mu}$. Using successively Proposition~\ref{prop:GassnerCocycle} and~(\ref{eq:MatricesReduced}), we deduce that
$$ \Grcpcp(\sigma_n^{-2}i_{c_{n+1}}(\alpha))-I_{n}=\begin{pmatrix} 
B-I_{n-2} & b_1 & (1-t^{-1}_{c_{n+1}})b_1\\
b_2 & a -1 & (1-t^{-1}_{c_{n+1}})a\\
v_1 & v_2 & (1-t^{-1}_{c_{n+1}})v_2 + t^{-1}_{c_{n+1}} t^{-1}_{c_n} - 1
\end{pmatrix}. $$
Just as in the proof of Proposition \ref{prop:Invariance},  our goal is to use Lemma~\ref{lem:NoInfo} and a sequence of elementary operations in order to remove the vectors $v_1$ and $v_2$. Firstly, we subtract to the last column the next-to-last column multiplied by $(1-t^{-1}_{c_{n+1}})$. Secondly, using $A_i$ to denote the rows of the resulting matrix, we multiply the last row of this matrix by $(t_{c_1} \cdots t_{c_n} -~1)$ and add to it $\sum_{i=1}^{n-1}(t_{c_1}\cdots t_{c_i}-1)A_i$. Using Lemma \ref{lem:NoInfo}, we obtain
$$ \det(\Grcpcp(\sigma_n^{-2}i_{c_{n+1}}(\alpha))-I_{n}) = (t_{c_1}  \cdots t_{c_n} - 1)^{-1}\det{
\begin{pmatrix} 
B-I_{n-2} & b_1 & 0\\
b_2 & a -1 & (1-t^{-1}_{c_{n+1}})\\
0 & 0 & e
\end{pmatrix}}, $$
where $e$ is given by $(t^{-1}_{c_{n+1}} t^{-1}_{c_n} - 1)(t_{c_1} \cdots t_{c_n} - 1) + (t_{c_1} \cdots t_{c_{n-1}} - 1)(1-t^{-1}_{c_{n+1}})$.  Finally, computing this latter determinant by expanding along the last row, we deduce that $\det(\Grcpcp(\sigma_n^{-2}i_{c_{n+1}}(\alpha))-~I_{n})$ is equal to 
$$ \frac{(t^{-1}_{c_{n+1}} t^{-1}_{c_n} - 1)(t_{c_1} \cdots t_{c_n} - 1) + (t_{c_1} \cdots t_{c_{n-1}} - 1)(1-t^{-1}_{c_{n+1}})}{(t_{c_1}  \cdots t_{c_n} - 1)}\det({\Grcc(\alpha)-I_{n-1}}).$$
The verification of $(R3)$ is concluded by plugging this result back into~(\ref{eq:Goal2}).

\subsubsection*{Axiom $(R4)$}
Suppose $L_0$ is obtained as the closure of some $n$-stranded $(c,c)$-braid $\alpha$. Using Lemma~\ref{lem:ManipLocal}, we can assume that $L_{--}$ is obtained as the closure of $\sigma_{1}^2\alpha$ and $L_{++}$ as the closure of $\sigma_{1}^{-2}\alpha$; here $\sigma_1$ is viewed as a $((c_1, c_2, c_3,\ldots, c_n),(c_2, c_1, c_3,\ldots, c_n))$-braid.
The relation we wish to prove is $f(L_{--})+f(L_{++})=f(L_0)$. Using Definition~\ref{def:Invariantf} and performing some simplifications, this reduces to
\begin{equation}\label{eq:axiome2}
\frac{g(\det(\Grcc(\sigma_{1}^2\alpha)-I_{n-1}))}{t_{c_1} t_{c_2}} + \frac{g(\det(\Grcc(\sigma_{1}^{-2}\alpha)-I_{n-1}))}{t_{c_1}^{-1} t_{c_2}^{-2}} = \\ (t_{c_1} t_{c_2} + t_{c_1}^{-1} t_{c_2}^{-1})g(\det(\Grcc(\alpha)-I_{n-1})).
\end{equation}
In order to check~(\ref{eq:axiome2}), we must compute $g(\det(\Grcc(\sigma_{1}^{\pm 2}\alpha)-I_{n-1}))$. To this end, we write $\Grcc(\alpha)=\left(\begin{smallmatrix} 
a & c & p \\
b & d & q\\
x & y & M \end{smallmatrix} \right),$
where $a$,$b$,$c$ and $d$ are elements of $\Lambda_{\mu}$, $p$ and $q$ are rows of length $(n-3)$, $x$ and $y$ are columns of length $(n-3)$, and $M$ is a square matrix of size $(n-3)$. Using successively~(\ref{eq:MatricesReduced}) and Proposition~\ref{prop:GassnerCocycle}, we deduce that
$$ \Grcc(\sigma_{1}^2\alpha)-I_{n-1}=\begin{pmatrix} 
t_{c_1} t_{c_2} a + (1-t_{c_1})c - 1& c & p\\
 t_{c_1} t_{c_2} b + (1-t_{c_1})d & d-1 & q\\
 t_{c_1} t_{c_2}x + (1-t_{c_1})y & y & M - I_{n-3}
\end{pmatrix} $$
and we use $A^+$ to denote the first column of this matrix. A similar computation yields
$$\Grcc(\sigma_{1}^{-2}\alpha)-I_{n-1}= \begin{pmatrix} 
t_{c_1}^{-1} t_{c_2}^{-1} a + (t_{c_2}^{-1} - t_{c_2}^{-1} t_{c_1}^{-1})c - 1& c & p \\
t_{c_1}^{-1} t_{c_2}^{-1} b + (t_{c_2}^{-1} - t_{c_2}^{-1} t_{c_1}^{-1})d  & d-1 & q\\
t_{c_1}^{-1} t_{c_2}^{-1} x + (t_{c_2}^{-1} - t_{c_2}^{-1} t_{c_1}^{-1})y & y & M - I_{n-3}
\end{pmatrix} $$
and we use $A^-$ (resp. $A^0$) to denote the first column of this latter matrix (resp. $\Grcc(\alpha)-I_{n-1}$). Furthermore, a direct computation shows that the following relation holds: 
\begin{equation}
\label{eq:FirstColumn}
\frac{1}{t_{c_1} t_{c_2}}g(A^{+}) + \frac{1}{t_{c_1}^{-1} t_{c_2}^{-1}}g(A^{-}) = (t_{c_1} t_{c_2} + t_{c_1}^{-1} t_{c_2}^{-1})g(A^{0}).
\end{equation}
We can now check~(\ref{eq:axiome2}). Indeed, as the three matrices involved in~(\ref{eq:axiome2}) differ only in their first column, this relation follows by expanding the determinants  with respect to their first column and applying~(\ref{eq:FirstColumn}). This concludes the verification of Axiom $(R4)$.

\subsubsection*{Axiom $(R5)$}

Using Lemma~\ref{lem:ManipLocal}, assume that $ L(1),\ldots,L(6)$ are respectively obtained as the closures of $\beta_1\alpha,\ldots, \beta_6 \alpha$ for some $((c_3, c_2, c_1, c_4, \ldots ,c_n),(c_1, c_2, c_3, c_4, \ldots ,c_n))$-braid $\alpha$, and where $\beta_1,\ldots,\beta_6$ are the $((c_1, c_2, c_3, c_4, \ldots ,c_n),(c_3, c_2, c_1, c_4, \ldots ,c_n))$-braids depicted in Figure~\ref{fig:JiangIIIBraids}.
\begin{figure}[!htb]
\includegraphics[scale=0.8]{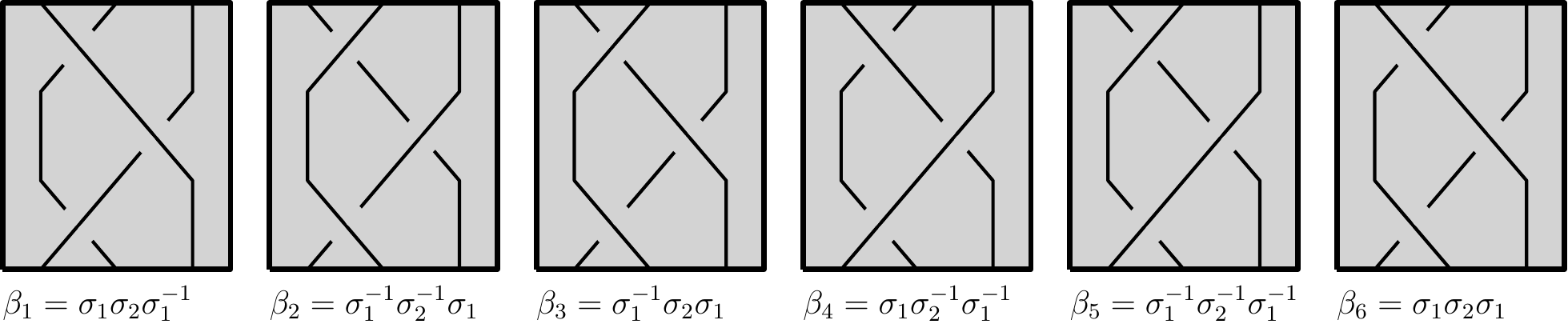}
\caption{The braids $\beta_1,\ldots,\beta_6$ involved in the verification of axiom $(R5)$.}
\label{fig:JiangIIIBraids}
\end{figure}

As usual, we start by rewriting the axiom in a more convenient fashion. Namely, after using Definition~\ref{def:Invariantf} and simplifying the signs and the $\langle \alpha \rangle$'s, the axiom reduces to verifying the following equation:
\begin{multline}\label{eq:axiomIII}
(t^{-1}_{c_1} t^{-1}_{c_2} - t_{c_1} t_{c_2})\left[\frac{1}{t^2_{c_1} t^{-1}_{c_3}} g(\det(\Grcc(\beta_1\alpha)-I_{n-1})) + \frac{1}{t^{-1}_{c_3}} g(\det(\Grcc(\beta_2\alpha)-I_{n-1}))\right] \\
+ (t_{c_2} t_{c_3} - t^{-1}_{c_2} t^{-1}_{c_3})\left[\frac{1}{t_{c_1}} g(\det(\Grcc(\beta_3\alpha)-I_{n-1})) + \frac{1}{t_{c_1} t^{-2}_{c_3}} g(\det(\Grcc(\beta_4\alpha)-I_{n-1}))\right] \\
+ (t_{c_1} t_{c_3} - t^{-1}_{c_1} t^{-1}_{c_3})\left[\frac{1}{t^{-1}_{c_2} t^{-2}_{c_3}} g(\det(\Grcc(\beta_5\alpha)-I_{n-1})) + \frac{1}{t^2_{c_1} t_{c_2}} g(\det(\Grcc(\beta_6\alpha)-I_{n-1}))\right] = 0.
\end{multline}

Since our aim is to compute each of the $g(\det(\Grcc(\beta_i\alpha)-I_{n-1})$, we start by writing out~$\Grcc(\alpha)$ as the matrix $\left(\begin{smallmatrix}\mathbf{a}\\ \mathbf{b}\\ \mathbf{e}\\ \mathbf{D} \end{smallmatrix}\right)^{T}$ where $\mathbf{a}$, $\mathbf{b}$ and $\mathbf{e}$ are rows of length $(n-1)$, and $\mathbf{D}$ is a matrix of size $(n-4) \times (n-1)$. Using successively~(\ref{eq:MatricesReduced}) and Proposition~\ref{prop:GassnerCocycle}, we deduce that the reduced colored Gassner matrices $\overline{\mathcal{B}}_{(c,c)}(\beta_1 \alpha),\ldots,\overline{\mathcal{B}}_{(c,c)}(\beta_6 \alpha)$ are respectively given by
$$ \begin{bmatrix}  -t_{c_1}\mathbf{b} + \mathbf{e}\\
-t_{c_1} t^{-1}_{c_3} \mathbf{a} + (t_{c_1} t^{-1}_{c_3} - t_{c_1})\mathbf{b} + \mathbf{e}\\
\mathbf{e}\\
\mathbf{D} \end{bmatrix}^{T} \qquad
\begin{bmatrix}  -t^{-1}_{c_2} t^{-1}_{c_3}\mathbf{b} + t^{-1}_{c_2} t^{-1}_{c_3}\mathbf{e}\\
-t_{c_2} \mathbf{a} + (1 - t^{-1}_{c_3})\mathbf{b} + t^{-1}_{c_3}\mathbf{e}\\
\mathbf{e}\\
\mathbf{D} \end{bmatrix}^{T} \qquad
\begin{bmatrix}  (1-t_{c_1})\mathbf{a} - t^{-1}_{c_2}\mathbf{b} + t^{-1}_{c_2}\mathbf{e} \\
-t_{c_1} t_{c_2} \mathbf{a} + \mathbf{e}\\
\mathbf{e}\\
\mathbf{D} \end{bmatrix}^{T} $$

$$ \begin{bmatrix}  - t^{-1}_{c_3} (1 - t_{c_1})\mathbf{a} - t_{c_1} t^{-1}_{c_3}\mathbf{b} + t^{-1}_{c_3}\mathbf{e}\\
-t^{-1}_{c_3} \mathbf{a} + t^{-1}_{c_3}\mathbf{e}\\
\mathbf{e}\\
\mathbf{D} \end{bmatrix}^{T}  \qquad
\begin{bmatrix}  -t^{-1}_{c_2} t^{-1}_{c_3}\mathbf{b} + t^{-1}_{c_2} t^{-1}_{c_3}\mathbf{e}\\
-t^{-1}_{c_3} \mathbf{a} + t^{-1}_{c_3}\mathbf{e}\\
\mathbf{e}\\
\mathbf{D} \end{bmatrix}^{T} \qquad
\begin{bmatrix}  -t_{c_1}\mathbf{b} + \mathbf{e} \\
-t_{c_1} t_{c_2} \mathbf{a} + \mathbf{e}\\
\mathbf{e}\\
\mathbf{D}  \end{bmatrix}^{T}. $$
Our first goal is to get rid of the $\textbf{e}$ in the first and second columns of $\overline{\mathcal{B}}_{(c,c)}(\beta_1 \alpha)-I_{n-1},\ldots,\overline{\mathcal{B}}_{(c,c)}(\beta_6 \alpha)-I_{n-1}$. This is done by subtracting the appropriate multiple of the third column from the first and second columns (notice that this operation does not change the determinant). We denote the resulting matrices by $M^1,\ldots,M^6$. As an illustration, we perform this operation on
$$\Grcc(\beta_6\alpha) - I_{n-1}= \begin{bmatrix} 
-t_{c_1} b_1 + e_1 - 1 & -t_{c_1} t_{c_2} a_1 + e_1 &  e_1  & \mathbf{d_1} \\
 -t_{c_1} b_2 + e_2 & -t_{c_1} t_{c_2} a_2 + e_2 - 1 & e_2 & \mathbf{d_2}\\
-t_{c_1} b_3 + e_3& -t_{c_1} t_{c_2} a_3 + e_3 & e_3 - 1 & \mathbf{d_3}\\
\ldots & \ldots & \ldots & \\
-t_{c_1} b_{n-1} + e_{n-1}&  -t_{c_1} t_{c_2} a_{n-1} + e_{n-1} &e_{n-1} & \mathbf{D'} - I_{n-4}
\end{bmatrix}, $$
where $\mathbf{d_1}$, $\mathbf{d_2}$, and $\mathbf{d_3}$ are the first three rows of $\mathbf{D}^T$, and $\mathbf{D'}$ is the $(n-4) \times (n-4)$-matrix made of the remaining rows of $\mathbf{D}^T$. Subtracting the third column from the first and second columns, we get:
$$M^6=\begin{bmatrix} 
-t_{c_1} b_1 - 1 & -t_{c_1} t_{c_2} a_1 &  e_1  & \mathbf{d_1} \\
-t_{c_1} b_2 & -t_{c_1} t_{c_2} a_2 - 1 & e_2 & \mathbf{d_2}\\
-t_{c_1} b_3 & -t_{c_1} t_{c_2} a_3 & e_3 - 1 & \mathbf{d_3}\\
\ldots & \ldots & \ldots & \\
-t_{c_1} b_{n-1} &  -t_{c_1} t_{c_2} a_{n-1} &e_{n-1} & \mathbf{D'} - I_{n-4}
\end{bmatrix}. $$
In order to conclude the verification of $(R5)$, the idea is now to consider a subset $M^l_{i,j}$ of the collection of all $2 \times 2 $ minors of the matrices $M^1,\ldots, M^6$ and to show~(\ref{eq:axiomIII}) for the $M^l_{i,j}$. In more details, for $0 \leq i < j \leq n-1$, and $l \in \{1, 2, \ldots 6 \}$, we use $M^{l}_{i,j}$ to denote the determinant of the $2 \times 2$ matrix obtained from $M^{l}$ by removing all columns but the first two, and all rows except the $i^{th}$ and~$j^{th}$. As we shall argue below, the following claim implies~(\ref{eq:axiomIII}):
\begin{claim} \label{claim}
For each $i$ and $j$ as above, we have the following equality:
	\begin{multline*}
	(t^{-1}_{c_1} t^{-1}_{c_2} - t_{c_1} t_{c_2})\left[\frac{1}{t^2_{c_1} t^{-1}_{c_3}} g(M^1_{i,j}) + \frac{1}{t^{-1}_{c_3}} g(M^2_{i,j})\right] + 
	(t_{c_2} t_{c_3} - t^{-1}_{c_2} t^{-1}_{c_3})\left[\frac{1}{t_{c_1}} g(M^3_{i,j}) + \frac{1}{t_{c_1} t^{-2}_{c_3}} g(M^4_{i,j})\right] \\
	+ (t_{c_1} t_{c_3} - t^{-1}_{c_1} t^{-1}_{c_3})\left[\frac{1}{t^{-1}_{c_2} t^{-2}_{c_3}} g(M^5_{i,j}) + \frac{1}{t^2_{c_1} t_{c_2}} g(M^6_{i,j})\right] = 0.
	\end{multline*}
\end{claim}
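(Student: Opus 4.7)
The strategy is to reduce the claim to a finite set of Laurent polynomial identities that can be checked by direct expansion. The main structural observation is that columns $3, 4, \ldots, n-1$ of the six matrices $M^1, M^2, \ldots, M^6$ are identical: the braids $\beta_1, \ldots, \beta_6$ act only on the first three strands, and the column operation producing $M^l$ from $\overline{\mathcal{B}}_{(c,c)}(\beta_l \alpha) - I_{n-1}$ modifies only columns $1$ and $2$. Hence these higher columns all coincide with the corresponding columns of $\overline{\mathcal{B}}_{(c,c)}(\alpha) - I_{n-1}$. Laplace expansion of $\det(M^l)$ along the first two columns then yields
\begin{equation*}
\det(M^l) = \sum_{1 \leq i < j \leq n-1} (-1)^{i+j+1} M^l_{i,j} \cdot C_{i,j},
\end{equation*}
where the complementary $(n-3) \times (n-3)$ minor $C_{i,j}$ is independent of $l$. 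Substituting this into (\ref{eq:axiomIII}), applying the ring homomorphism $g$, and swapping the order of summation, the axiom transforms into
\begin{equation*}
\sum_{1 \leq i<j \leq n-1} (-1)^{i+j+1} g(C_{i,j}) \cdot K_{i,j} = 0,
\end{equation*}
where $K_{i,j}$ denotes the left hand side of the claim for the pair $(i,j)$. Proving the claim therefore suffices.

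For the claim itself, the plan is direct verification. A glance at the six matrices shows that, after the column operation, the first two columns of $M^l$ are each $\Lambda_\mu$-linear combinations of $\mathbf{a}^T$ and $\mathbf{b}^T$, modified by $\Lambda_\mu$-valued contributions supported only in rows $1, 2, 3$ (coming from the identity $-I_{n-1}$ together with the elimination of $\mathbf{e}$). Consequently, treating the coordinates of $\mathbf{a}$ and $\mathbf{b}$ as independent formal variables, $M^l_{i,j}$ is a polynomial expression in $a_i, b_i, a_j, b_j$, with extra Laurent polynomial contributions exactly when $i$ or $j$ belongs to $\{1, 2, 3\}$. The claim therefore splits into a finite collection of Laurent polynomial identities in $t_{c_1}, t_{c_2}, t_{c_3}$.

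In the generic case $i, j \geq 4$, no identity correction appears, and a direct two-by-two determinant calculation gives
\begin{equation*}
M^l_{i,j} = \rho_l \cdot (b_i a_j - b_j a_i),
\end{equation*}
where $\rho_l$ is precisely the Laurent monomial appearing as the denominator of the $l$-th summand of the claim. Since $\rho_l$ is a monomial, $g(\rho_l)/\rho_l = \rho_l$, so the common factor $g(b_i a_j - b_j a_i)$ can be extracted from every term of $K_{i,j}$, and the claim collapses to a single scalar Laurent polynomial identity in $t_{c_1}, t_{c_2}, t_{c_3}$, verified by expansion. The remaining cases, in which one or both of $i, j$ lies in $\{1, 2, 3\}$, are finite in number and each reduces to an analogous identity. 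The main obstacle is the careful bookkeeping required to track the identity corrections consistently through all six minors; no conceptual difficulty arises beyond this.
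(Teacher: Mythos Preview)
Your proposal is correct and follows essentially the same approach as the paper: the paper likewise states that the claim is a direct calculation reducing to seven distinct types of computations (the generic case $i,j\geq 4$ together with the six cases where at least one index lies in $\{1,2,3\}$), and refers to an external appendix for the details. Your observation that in the generic case $M^l_{i,j}=\rho_l(b_ia_j-b_ja_i)$ with $\rho_l$ equal to the very denominator appearing in the claim is a clean way to handle that case, and the remaining bookkeeping is exactly what the paper also leaves as ``tedious but direct''.
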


The proof of this claim is a tedious but direct calculation since (despite the high number of minors) it actually only involves 7 distinct types of computations. Indeed, for $i,j \geq 4$, all the~$M_{i,j}$ are computed from matrices of the same form (albeit with different indices). We refer to~\cite[Appendix]{EstierMaster} for examples of these computations.

It remains to argue why the claim concludes the verification of axiom $(R5)$. As we explained above, the axiom will follow once we show that~(\ref{eq:axiomIII}) holds with each $\Grcc(\beta_l\alpha) - I_{n-1}$ replaced by the corresponding $M^l$. To obtain this latter equality, we successively expand each determinant along its columns (starting from the rightmost column and progressing to the left) until there remain six sums of the aforementioned $2 \times 2$ minors. The assertion then follows by grouping up the determinants according to their coefficients, and using the claim. This concludes the proof of Theorem~\ref{thm:Main}.

\section{Homological interpretation of the reduced colored Gassner representation}
\label{sec:Homology}

The aim of this section is to prove Theorem~\ref{thm:Second} which provides an intrinsic definition of the reduced colored Gassner matrices and relates them to the so-called \emph{reduced colored Gassner representation}~\cite{KirkLivingston, CimasoniConwayGG}. To achieve this, Subsection~\ref{sub:FreelyGenerate} starts by providing a homological interpretation of the elements $\widetilde{g}_1,\ldots,\widetilde{g}_{n-1}$, while Subsection~\ref{sub:Agree} concludes the proof of Theorem~\ref{thm:Second}.

\subsection{Preliminary lemmas}
\label{sub:FreelyGenerate}

Fix a sequence $c=(c_1,\ldots,c_n)$ of integers in $\lbrace 1,\ldots, \mu \rbrace$ and a basepoint $z$ for $D_c$ which lies in its (unique) boundary component $\partial D_c$. Recall that $p \colon \widehat{D}_c \to D_c$ denotes the regular cover corresponding to the kernel of $\psi_c \colon \pi_1(D_c) \to \Z^\mu, x_i \mapsto t_{c_i}$. We still write $P$ for the fiber $p^{-1}(z)$ over $z$ and we use the notation $\partial \widehat{D}_c \to \partial D_c$ for the restriction of $p$ to $\partial D_c$. Finally, recall from Section~\ref{sec:Prelim} that $\pi_1(D_n,z)$ is freely generated either by the loops $x_1,\ldots,x_n$ depicted in Figure~\ref{fig:DiskGeneratorsThesis} or by $g_1,\ldots, g_n$, where $g_i=x_1\cdots x_i$. From now on, we will assume that~$g_n$ lies in $\partial D_c$.

In order to provide a homological interpretation of the $\widetilde{g}_i$, we start with a preliminary lemma.

\begin{lemma}
\label{lem:ShortExact}
The long exact sequence of the triple $(\widehat{D}_c,\partial \widehat{D}_c,P)$ gives rise to the short exact sequence
$$ 0 \to H_1(\partial \widehat{D}_c,P) \stackrel{j}{\to} H_1(\widehat{D}_c,P) \stackrel{\pi}{\to} H_1(\widehat{D}_c,\partial \widehat{D}_c) \to 0.$$
Furthermore, $\im(j)$ is freely generated by $\widetilde{g}_n$.
\end{lemma}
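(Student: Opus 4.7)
My plan is to extract the short exact sequence from the long exact sequence of the triple $(\widehat{D}_c, \partial \widehat{D}_c, P)$, and then to compute $H_1(\partial \widehat{D}_c, P)$ directly in order to identify $\im(j)$. The relevant stretch of the long exact sequence is
$$H_2(\widehat{D}_c, \partial \widehat{D}_c) \to H_1(\partial \widehat{D}_c, P) \stackrel{j}{\to} H_1(\widehat{D}_c, P) \stackrel{\pi}{\to} H_1(\widehat{D}_c, \partial \widehat{D}_c) \to H_0(\partial \widehat{D}_c, P),$$
so it suffices to show that the outer two terms vanish. The vanishing of $H_0(\partial \widehat{D}_c, P)$ is immediate, since $z \in \partial D_c$ forces the fiber $P = p^{-1}(z)$ to meet every connected component of $\partial \widehat{D}_c$. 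For $H_2(\widehat{D}_c, \partial \widehat{D}_c)$, I would deformation retract $D_c$ onto a one-dimensional subcomplex $K \supset \partial D_c$ (for example, $\partial D_c$ together with $n$ arcs joining it to small loops around the punctures); lifting this retraction to the cover, $\widehat{D}_c$ retracts onto the one-dimensional complex $p^{-1}(K) \supset \partial \widehat{D}_c$, whence $H_i(\widehat{D}_c, \partial \widehat{D}_c) = 0$ for all $i \geq 2$ and, in particular, $j$ is injective.

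For the second assertion, the plan is to identify $H_1(\partial \widehat{D}_c, P)$ with a free $\Lambda_\mu$-module of rank one with generator $\widetilde{g}_n$. Set $T := t_{c_1} \cdots t_{c_n}$. The convention $g_n \subset \partial D_c$ makes $g_n$ the generator of $\pi_1(\partial D_c) \cong \Z$, and $\psi_c(g_n) = T$ has infinite order in $\Z^\mu$. It follows that the restricted regular cover $\partial \widehat{D}_c \to \partial D_c$ has connected components indexed by $\Z^\mu / \langle T \rangle$, that each component $C$ is homeomorphic to $\R$, and that $P \cap C$ is a single $\langle T \rangle$-orbit, hence a discrete copy of $\Z$ inside $C$. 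A direct cellular computation gives $H_1(C, P \cap C) \cong \Z[T^{\pm 1}]$, freely generated by the arc from the chosen lift $\widetilde{z}$ to $T \cdot \widetilde{z}$, which is exactly $\widetilde{g}_n$. Summing over components and recording the full $\Z^\mu$-action packages this as an induction from $\Z[T^{\pm 1}]$ up to $\Lambda_\mu$:
$$H_1(\partial \widehat{D}_c, P) \cong \Lambda_\mu \otimes_{\Z[T^{\pm 1}]} \Z[T^{\pm 1}] \cdot \widetilde{g}_n \cong \Lambda_\mu \cdot \widetilde{g}_n.$$
Combined with the injectivity of $j$ from the first step, this yields $\im(j) = \Lambda_\mu \cdot \widetilde{g}_n$ inside $H_1(\widehat{D}_c, P)$, consistent with $\widetilde{g}_n$ being part of the free $\Lambda_\mu$-basis of the target recalled in Section~\ref{sec:Prelim}.

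I expect the main obstacle to lie in the second step, specifically the bookkeeping needed to identify the induced module structure and to verify that the free $\Lambda_\mu$-generator of $H_1(\partial \widehat{D}_c, P)$ is genuinely the class $\widetilde{g}_n$ (rather than some deck translate of it); the precise choice of basepoint lift and the normalization $g_n \subset \partial D_c$ both enter here. By comparison, the vanishing of the two flanking terms in the long exact sequence, and hence the short exact sequence itself, are comparatively routine.
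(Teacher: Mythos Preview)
Your argument is correct and follows essentially the same route as the paper's proof: both establish the short exact sequence by showing that $H_0(\partial \widehat{D}_c,P)$ and $H_2(\widehat{D}_c,\partial \widehat{D}_c)$ vanish, and both identify $\im(j)$ by analyzing the restricted cover $\partial \widehat{D}_c\to\partial D_c$ as a disjoint union of lines indexed by $\Z^\mu/\langle T\rangle$. Your account is in fact more explicit than the paper's, which dispatches the vanishing of $H_2(\widehat{D}_c,\partial \widehat{D}_c)$ in a phrase and handles the second assertion with ``similar topological considerations''; your retraction onto a one-complex containing $\partial D_c$ and your induction description $H_1(\partial \widehat{D}_c,P)\cong \Lambda_\mu\otimes_{\Z[T^{\pm 1}]}\Z[T^{\pm 1}]\cdot\widetilde{g}_n$ spell out precisely what the paper leaves implicit. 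The worry you flag about $\widetilde{g}_n$ versus a deck translate is not a genuine obstacle: by definition $\widetilde{g}_n$ is the lift starting at the fixed lift $\widetilde{z}$ of $z$, which singles it out canonically.
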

\begin{proof}
To prove both claims, we must understand the $\Lambda_\mu$-module $H_i(\partial \widehat{D}_c,P)$ for $i=0,1$. Since the covering $\partial \widehat{D}_c \to \partial D_c$ arises from the restriction of the homomorphism $\psi_c \colon \pi_1(D_c) \to~\Z^\mu$ to $\pi_1(\partial D_c)$, it consists in a disjoint union of copies of the regular cover $\R \to \partial D_c$ with deck transformation generator $t_{c_1}\cdots t_{c_n}$. It follows that $H_0(\partial \widehat{D}_c,P)$ vanishes (give the circle $\partial D_c$ its usual cell structure with $z$ as its unique $0$-cell; it follows that the $0$-skeleton of $\widehat{D}_c$ is given by $P$). The first assertion is now immediate since $H_2(\widehat{D}_c,\partial \widehat{D}_c)$ also vanishes. The second assertion follows from similar topological considerations.
\end{proof}

Just as in Lemma~\ref{lem:ShortExact}, we use $\pi$ to denote the inclusion induced map $H_1(\widehat{D}_c,P) \to H_1(\widehat{D}_c,\partial \widehat{D}_c)$. 

\begin{lemma}
\label{lem:FreelyGenerate}
The $\Lambda_\mu$-module $H_1(\widehat{D}_c,\partial \widehat{D}_c)$ is freely generated by $\pi(\widetilde{g}_1),\ldots,\pi(\widetilde{g}_{n-1})$.
\end{lemma}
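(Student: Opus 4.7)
The plan is to deduce the result directly from Lemma~\ref{lem:ShortExact} together with the free generation of $H_1(\widehat{D}_c,P)$ by $\widetilde{g}_1,\ldots,\widetilde{g}_n$ that was recorded in Section~\ref{sec:Prelim} (the splitting $H_1(\widehat{D}_c,P)=\bigoplus_{i=1}^{n-1}\Lambda_\mu \widetilde{g}_i\oplus \Lambda_\mu\widetilde{g}_n$).

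First, I would apply $\pi$ to the basis: since the short exact sequence
$$0\to H_1(\partial\widehat{D}_c,P)\stackrel{j}{\to} H_1(\widehat{D}_c,P)\stackrel{\pi}{\to} H_1(\widehat{D}_c,\partial\widehat{D}_c)\to 0$$
is exact and $\pi$ is surjective, the elements $\pi(\widetilde{g}_1),\ldots,\pi(\widetilde{g}_n)$ generate $H_1(\widehat{D}_c,\partial\widehat{D}_c)$. The key input from Lemma~\ref{lem:ShortExact} is that $\im(j)$ is freely generated by $\widetilde{g}_n$, so $\pi(\widetilde{g}_n)=0$; hence $\pi(\widetilde{g}_1),\ldots,\pi(\widetilde{g}_{n-1})$ already generate.

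Second, I would show these generators are $\Lambda_\mu$-linearly independent. Suppose $\sum_{i=1}^{n-1}\lambda_i\pi(\widetilde{g}_i)=0$ in $H_1(\widehat{D}_c,\partial\widehat{D}_c)$. Then $\sum_{i=1}^{n-1}\lambda_i\widetilde{g}_i$ lies in $\ker(\pi)=\im(j)=\Lambda_\mu \widetilde{g}_n$, so there exists $\lambda_n\in\Lambda_\mu$ with $\sum_{i=1}^{n-1}\lambda_i\widetilde{g}_i=\lambda_n\widetilde{g}_n$ inside $H_1(\widehat{D}_c,P)$. Since $\widetilde{g}_1,\ldots,\widetilde{g}_n$ are a free $\Lambda_\mu$-basis of $H_1(\widehat{D}_c,P)$, this forces all $\lambda_i$ to vanish. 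Combining the two steps, $\pi(\widetilde{g}_1),\ldots,\pi(\widetilde{g}_{n-1})$ form a free $\Lambda_\mu$-basis, which is exactly the statement of the lemma.

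There is essentially no obstacle here: the lemma is a purely formal consequence of Lemma~\ref{lem:ShortExact} and the known freeness of $H_1(\widehat{D}_c,P)$ on $\widetilde{g}_1,\ldots,\widetilde{g}_n$. The only subtlety worth double-checking is that the direct-sum decomposition singles out $\widetilde{g}_n$ in a way compatible with $\im(j)$; but both summands equal the free $\Lambda_\mu$-submodule on $\widetilde{g}_n$, so the quotient is canonically identified with $\bigoplus_{i=1}^{n-1}\Lambda_\mu \widetilde{g}_i$ and $\pi$ sends the chosen basis to the chosen basis. This suffices to conclude.
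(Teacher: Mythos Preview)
Your proof is correct and follows essentially the same approach as the paper's own proof: both use the short exact sequence of Lemma~\ref{lem:ShortExact}, the surjectivity of $\pi$ together with $\pi(\widetilde{g}_n)=0$ to get generation, and the identification $\ker(\pi)=\Lambda_\mu\widetilde{g}_n$ combined with the freeness of $H_1(\widehat{D}_c,P)$ on $\widetilde{g}_1,\ldots,\widetilde{g}_n$ to get linear independence. The only cosmetic difference is the order of the two steps and that the paper justifies $\pi(\widetilde{g}_n)=0$ by noting $\widetilde{g}_n\subset\partial\widehat{D}_c$ directly rather than via $\widetilde{g}_n\in\im(j)$.
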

\begin{proof}
In order to show that $\pi(\widetilde{g}_1),\ldots,\pi(\widetilde{g}_{n-1})$ are linearly independent, assume that the linear combination~$\sum_{i=0}^{n-1} \lambda_i \pi(\widetilde{g}_{i})$ vanishes for some $\lambda_i$ in $\Lambda_\mu$. By exactness of the sequence displayed in Lemma~\ref{lem:ShortExact}, there is an $x$ in $H_1(\partial \widehat{D}_c,P)$ such that~$j(x)=\sum_{j=1}^{n-1}\lambda_i \widetilde{g}_i$. Since Lemma~\ref{lem:ShortExact} implies that $\im(j)$ is freely generated by $\widetilde{g}_n$, we deduce that there is a $\lambda \in \Lambda_\mu$ for which $j(x)=\lambda \widetilde{g}_n$. The result now follows from the fact that $\widetilde{g}_1,\ldots,\widetilde{g}_n$ form a basis of~$H_1(\widehat{D}_c,P)$.

Next, we show that $\pi(\widetilde{g}_1),\ldots,\pi(\widetilde{g}_{n-1})$ generate $H_1(\widehat{D}_c,\partial \widehat{D}_c)$. Given $x \in H_1(\widehat{D}_c,\partial \widehat{D}_c)$, we can find some $\lambda_1,\ldots,\lambda_n$ in $\Lambda_\mu$ such that $x=\pi(\sum_{i=1}^n \lambda_i \widetilde{g}_i)$: indeed $\pi$ is surjective thanks to Lemma~\ref{lem:ShortExact} and the $\widetilde{g}_1,\ldots,\widetilde{g}_n$ form a basis of $H_1(\widehat{D}_c,P)$. To prove the assertion, we must show that $\pi(\widetilde{g}_n)$ vanishes, but this is immediate since $\widetilde{g}_n$ lies in $\partial \widehat{D}_c$.
\end{proof}

\subsection{Relation to the reduced colored Gassner representation}
\label{sub:Agree}
Let $S$ be the multiplicative subset of $\Lambda_\mu$ generated by $(1-t_1), \ldots, (1-t_\mu)$ and let $\Lambda_S$ be the localization of~$\Lambda_\mu$ with respect to $S$. Fix a self-homeomorphism $h_\beta$ representing a $(c,c)$-braid $\beta$. Lifting $h_\beta$ to~$\widehat{D}_c$ gives rise to a well-defined automorphism $(\widetilde{h}_\beta)_*$ of $\Lambda_S \otimes_ {\Lambda_\mu} H_1(\widehat{D}_c)$. The \emph{reduced colored Gassner representation}
$$ B_c \to \operatorname{Aut}_{\Lambda_S}(\Lambda_S \otimes_ {\Lambda_\mu} H_1(\widehat{D}_c))$$
is obtained by mapping a braid $\beta$ to $(\widetilde{h}_\beta)_*$. Kirk-Livingston-Wang~\cite{KirkLivingstonWang} initially defined this representation using coefficients in $Q$, the field of fractions of $\Lambda_\mu$. To the best of our knowledge, the first use of $\Lambda_S$-coefficients in this setting occured in~\cite{CimasoniConwayGG}, see also~\cite[Section 9.4]{ConwayThesis}. Note that these localizations are performed because $H_1(\widehat{D}_c)$ is not free for $\mu >2$ while the $\Lambda_S$-module $\Lambda_S \otimes_{\Lambda_\mu} H_1(\widehat{D}_c)$ is always free~\cite[Lemma 9.4.6]{ConwayThesis}.

Finally, given a $(c,c)$-braid $\beta$, recall that a homeomorphism~$h_\beta$ representing $\beta$ induces a map on $\Lambda_S \otimes_ {\Lambda_\mu} H_1(\widehat{D}_c,\partial \widehat{D}_c)$. We are ready to prove Theorem~\ref{thm:Second} whose statement we recall for the reader's convenience.

\begin{customthm}{\ref{thm:Second}}
\label{thm:SecondEnd}
Given a $(c,c)$-braid $\beta$, the following statements hold:
\begin{enumerate}
\item The map induced by $\beta$ on $\Lambda_S \otimes_ {\Lambda_\mu} H_1(\widehat{D}_c,\partial \widehat{D}_c)$ is represented by the reduced colored Gassner matrix $\overline{\mathcal{B}}_{(c,c)}(\beta)$.
\item The inclusion induced homomorphism $\Phi \colon \Lambda_S \otimes_ {\Lambda_\mu} H_1(\widehat{D}_c) \to \Lambda_S \otimes_ {\Lambda_\mu} H_1(\widehat{D}_c,\partial \widehat{D}_c)$ intertwines the reduced colored Gassner representation with the map induced by $\beta$. Furthermore, after tensoring with $Q$, the induced map $\operatorname{id}_Q \otimes \Phi$ is an isomorphism which conjugates the two representations.
\end{enumerate}
\end{customthm}

\begin{proof}
To prove the first assertion, recall that by definition, the reduced colored Gassner matrix is the restriction of the unreduced colored Gassner representation to the free submodule of $H_1(\widehat{D}_c,P)$ generated by the $\widetilde{g}_1,\ldots, \widetilde{g}_{n-1}$. Since the unreduced colored Gassner representation is the automorphism of $H_1(\widehat{D}_c,P)$ induced by $\beta$, the result now immediately follows from Lemma~\ref{lem:FreelyGenerate}. To prove the second assertion, consider the long exact sequence of the pair $(\widehat{D}_c,\partial \widehat{D}_c)$. Tensoring with $\Lambda_S$, which is flat over $\Lambda_\mu$, we obtain the exact sequence
$$ 0 \to \Lambda_S \otimes_ {\Lambda_\mu} H_1(\widehat{D}_c) \to \Lambda_S \otimes_ {\Lambda_\mu} H_1(\widehat{D}_c,\partial \widehat{D}_c) \to \Lambda_S \otimes_ {\Lambda_\mu} H_0(\partial \widehat{D}_c). $$
Since both representations are induced by $\widetilde{h}_\beta$, the naturality of the long exact sequence in homology implies that the homomorphism $\Phi$ induced by the inclusion map $(\widehat{D}_c,\emptyset) \to (\widehat{D}_c,\partial \widehat{D}_c)$ satisfies the required property. Since $H_0(\partial \widehat{D}_c) \cong \Lambda_\mu /(t_{c_1}\cdots t_{c_n}-1)$, passing to~$Q$ coefficients, $Q \otimes_ {\Lambda_\mu} H_0(\partial \widehat{D}_c)$ vanishes  and the final assertion follows.
\end{proof}

\appendix
\section{A second proof of Theorem~\ref{thm:Main}.}
\label{Appendix}

This appendix contains an alternative proof of Theorem~\ref{thm:Main} that was suggested to us by a kind referee. This proof relies on articles of Morton~\cite{MortonBraids} and Hartley~\cite{Hartley} but has two notable advantages: firstly it is much shorter than the one given in Section~\ref{sec:Thm} and secondly it is more geometrical in nature.

\begin{proof}[Alternative proof of Theorem~\ref{thm:Main}]
We work in the case $\mu=n$ for simplicity.
Use $A$ to denote the simple closed curve $\partial D_n$, oriented with the clockwise orientation. 
View $A \cup \widehat{\beta}$ as an $(n+1)$-colored link, and use $x$ to denote the variable of  $\Delta_{\widehat{\beta} \cup A}$ corresponding to the component $A$.
A theorem due to Morton~\cite[Theorem 1]{MortonBraids} relates $\Delta_{\widehat{\beta} \cup A}$ to the colored Gassner representation of $\beta$.
Using our conventions, this result reads as
$$ \Delta_{\widehat{\beta} \cup A}(t_1^2,\ldots,t_n^2,x^2)=g(\det(x^{-1} \overline{\mathcal{B}}_{(c,c)}(\beta)-I_{n-1})).$$
Consequently, we deduce that $x^{n-1} \langle \beta \rangle g(\det(x^{-1} \overline{\mathcal{B}}_{(c,c)}(\beta)-I_{n-1}))$ is symmetric up to a sign. We therefore obtain the following equation for a certain $\varepsilon$ that remains to be determined:
\begin{equation}
\label{eq:MortonPotential}
 \nabla_{\widehat{\beta} \cup A}(t_1,\ldots,t_n,x)=\varepsilon x^{n-1} \langle \beta \rangle g (\det(x^{-1} \overline{\mathcal{B}}_{(c,c)}(\beta)-I_{n-1})).
\end{equation}

We claim that $(-1)^{n-1}\varepsilon =1$. To achieve this, we compute the highest degree monomial of~$\nabla_{\widehat{\beta} \cup A}(1,\ldots,1,x)$ in two different ways. On the one hand, if we set $t_i=1$ in the right hand side of~\eqref{eq:MortonPotential}, then the highest degree monomial in the resulting expression is $ \varepsilon (-1)^{n-1} x^{n-1}$. On the other hand, if we use $\lambda_i$ to denote the linking number of the $i$-th component of $\widehat{\beta}$ with the axis $A$, then an application of~\cite[Equation~5.4]{Hartley} yields
\begin{equation}
\label{eq:UseHartley}
\nabla_{\widehat{\beta} \cup A}(1,\ldots,1,x)=\nabla_A(x)\prod_{i=1}^n (x^{\lambda_i}-x^{-\lambda_i}).
\end{equation}
Since $A$ is an unknot, we have $\nabla_A(x)=(x-x^{-1})^{-1}$, and since all the linking numbers $\lambda_i$ are positive, we know that $\sum_{i=1}^n \lambda_i=n$. We therefore deduce that the highest degree monomial in~\eqref{eq:UseHartley} is $x^{n-1}$. This proves  the claim.

We now conclude the proof of the theorem by deducing the potential function~$\nabla_{\widehat{\beta}}$ from~$\nabla_{\widehat{\beta} \cup A}$. To that end, we set $x=1$ in~\eqref{eq:MortonPotential}, use the claim and apply Hartley's normalisation of the Torres formula~\cite[Equation~5.3]{Hartley} to obtain 
\begin{align*}
 \nabla_{\widehat{\beta}}(t_1,\ldots,t_n)
 &=\frac{1}{(t_1\cdots t_n-t_1^{-1}\cdots t_n^{-1})}\nabla_{\widehat{\beta} \cup A}(t_1,\ldots,t_n,1) \\
 &=\frac{1}{(t_1\cdots t_{n}-t_1^{-1}\cdots t_{n}^{-1})} (-1)^{n-1} \langle \beta \rangle g (\det( \overline{\mathcal{B}}_{(c,c)}(\beta)-I_{n-1})
 \end{align*}
 This concludes the alternative proof of the theorem.
\end{proof}

\bibliography{BiblioBurauPotential}
\bibliographystyle{plain}

\end{document}